\documentclass[11pt]{amsart}
\usepackage{amsmath, amssymb, amscd, mathrsfs, url, pinlabel,verbatim}
\usepackage[pagebackref]{hyperref}
\usepackage[margin=1in,marginparwidth=0.75in,centering,letterpaper,dvips]{geometry}
\usepackage{color,dcpic,latexsym,graphicx,epstopdf,comment}
\usepackage[all]{xy}
\usepackage[dvipsnames]{xcolor}
\usepackage{tikz,tikz-cd,pgfplots}
\usepackage{upquote,tabularx,textcomp}
\usepackage[shortlabels]{enumitem}
\usepackage{mathtools}
\usepackage[color=blue!20!white,textsize=tiny]{todonotes}

\title{Zero-surgery characterizes infinitely many knots}

\author[John A. Baldwin]{John A. Baldwin}
\address{Department of Mathematics \\ Boston College}
\email{john.baldwin@bc.edu}

\author[Steven Sivek]{Steven Sivek}
\address{Department of Mathematics, Imperial College London}
\email{s.sivek@imperial.ac.uk}

\makeatletter
\newtheorem*{rep@theorem}{\rep@title}
\newcommand{\newreptheorem}[2]{%
\newenvironment{rep#1}[1]{%
 \def\rep@title{#2 \ref{##1}}%
 \begin{rep@theorem}}%
 {\end{rep@theorem}}}
\makeatother

\newtheorem {theorem}{Theorem}
\newreptheorem{theorem}{Theorem}
\newtheorem {lemma}[theorem]{Lemma}
\newtheorem {proposition}[theorem]{Proposition}

\numberwithin{equation}{section}
\numberwithin{theorem}{section}

\theoremstyle{definition}

\newtheorem{remark}[theorem]{Remark}
\newtheorem*{remark*}{Remark}

\setlength{\parskip}{3pt}

\newlist{pcases}{enumerate}{1}
\setlist[pcases]{
  label=\bf{Case~\arabic*:}\protect\thiscase.~,
  ref=\arabic*,
  align=left,
  labelsep=0pt,
  leftmargin=0pt,
  labelwidth=0pt,
  parsep=0pt
}
\newcommand{\case}[1][]{%
  \if\relax\detokenize{#1}\relax
    \def\thiscase{}%
  \else
    \def\thiscase{~#1}%
  \fi
  \item
}

\newcommand{\Z}{\mathbb{Z}}

\newcommand{\C}{\mathbb{C}}

\newcommand{\F}{\mathbb{F}}
\newcommand{\Q}{\mathbb{Q}}
\newcommand{\spc}{\operatorname{Spin}^c}
\newcommand{\spinc}{\mathfrak{s}}

\newcommand\hfk{\mathit{HFK}}
\newcommand\cfk{\mathit{CFK}}
\newcommand\cfkinfty{\cfk^\infty}
\newcommand\hfkhat{\widehat{\hfk}}



\DeclareFontFamily{U}{mathx}{\hyphenchar\font45}
\DeclareFontShape{U}{mathx}{m}{n}{
      <5> <6> <7> <8> <9> <10>
      <10.95> <12> <14.4> <17.28> <20.74> <24.88>
      mathx10
      }{}
\DeclareSymbolFont{mathx}{U}{mathx}{m}{n}
\DeclareFontSubstitution{U}{mathx}{m}{n}
\DeclareMathAccent{\widecheck}{0}{mathx}{"71}

\newcommand{\hfhat}{\widehat{\mathit{HF}}}
\newcommand{\hfp}{\mathit{HF}^+}
\newcommand{\hfred}{\mathit{HF}^+_{\mathrm{red}}}

\newcommand{\mirror}[1]{\overline{#1}}

\newcommand{\Wh}{\mathrm{Wh}}

\makeatletter
\DeclareFontFamily{OMX}{MnSymbolE}{}
\DeclareSymbolFont{MnLargeSymbols}{OMX}{MnSymbolE}{m}{n}
\SetSymbolFont{MnLargeSymbols}{bold}{OMX}{MnSymbolE}{b}{n}
\DeclareFontShape{OMX}{MnSymbolE}{m}{n}{
    <-6>  MnSymbolE5
   <6-7>  MnSymbolE6
   <7-8>  MnSymbolE7
   <8-9>  MnSymbolE8
   <9-10> MnSymbolE9
  <10-12> MnSymbolE10
  <12->   MnSymbolE12
}{}
\DeclareFontShape{OMX}{MnSymbolE}{b}{n}{
    <-6>  MnSymbolE-Bold5
   <6-7>  MnSymbolE-Bold6
   <7-8>  MnSymbolE-Bold7
   <8-9>  MnSymbolE-Bold8
   <9-10> MnSymbolE-Bold9
  <10-12> MnSymbolE-Bold10
  <12->   MnSymbolE-Bold12
}{}

\let\llangle\@undefined
\let\rrangle\@undefined
\DeclareMathDelimiter{\llangle}{\mathopen}%
                     {MnLargeSymbols}{'164}{MnLargeSymbols}{'164}
\DeclareMathDelimiter{\rrangle}{\mathclose}%
                     {MnLargeSymbols}{'171}{MnLargeSymbols}{'171}
\makeatother

\newcounter{desccount}

\newcommand{\descref}[1]{\hyperref[#1]{#1}}

\usetikzlibrary{calc,intersections}
\tikzset{every picture/.style=thick}
\tikzset{link/.style = { white, double = black, line width = 1.75pt, double distance = 1.25pt, looseness=1.75 }}
\tikzset{crossing/.style = {draw, circle, dotted, minimum size=0.5cm, inner sep=0, outer sep=0}}
\pgfplotsset{compat=1.12}

\begin{document}

\begin{abstract}
We prove that $0$ is a characterizing slope for infinitely many knots, namely the genus-1 knots whose knot Floer homology is $2$-dimensional in the top Alexander grading, which we classified in recent work and which include all $(-3,3,2n+1)$ pretzel knots.  This was previously only known for $5_2$ and its mirror, as a corollary of that classification, and for the unknot, trefoils, and the figure eight by work of Gabai from 1987.
\end{abstract}

\maketitle

\section{Introduction} \label{sec:intro}

A rational number $r\in\Q$ is said to be a \emph{characterizing slope} for a knot $K \subset S^3$ if the orientation-preserving homeomorphism type of the manifold obtained via Dehn surgery on $K$ of slope $r$ uniquely determines $K$; that is, \[ \text{if } S^3_r(J) \cong S^3_r(K) \text{ then } J=K.\]
It seems very hard to prove for most knots that any given integral slope is characterizing.  This is especially true for slope 0: in his celebrated 1987 work \cite{gabai-foliations3}, Gabai proved  that $S^3_0(K)$ detects the genus of $K$ and whether or not $K$ is fibered, which immediately implies that $0$-surgery characterizes the unknot (resolving the Property R Conjecture), trefoils, and figure eight.  To our knowledge, the only other knots known to be characterized by their $0$-surgeries are $5_2$ and its mirror, which we proved in our recent work \cite{bs-characterizing}. The main result of this paper is that  infinitely many knots are characterized by their $0$-surgeries:

\begin{theorem} \label{thm:main}
Let $K$ be any of the knots \[15n_{43522},\,\, \Wh^-(T_{2,3},2),\,\,
\Wh^+(T_{2,3},2), \,\,P(-3,3,2n+1) \ (n\in\Z),\] or their mirrors. Then $0$ is a characterizing slope for $K$.
\end{theorem}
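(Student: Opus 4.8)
The plan is: if $S^3_0(J)\cong S^3_0(K)$ for $K$ one of the listed knots, first force $J$ to lie in the same finite-plus-one-parameter list, then show $S^3_0$ separates the knots on that list. Suppose then $S^3_0(J)\cong S^3_0(K)$ with $K$ as in the statement. By Gabai's theorem that $S^3_0$ detects the Seifert genus and fiberedness \cite{gabai-foliations3}, $g(J)=g(K)=1$ and $J$ is fibered iff $K$ is; since each knot on the list is non-fibered (its knot Floer homology has rank $2$, not $1$, in Alexander grading $1$), so is $J$, whence $\dim\hfkhat(J,1)\ge 2$. Moreover the homeomorphism type of $S^3_0(J)$ determines $\Delta_J\doteq\Delta_K$ (the order of $H_1$ of the infinite cyclic cover), the Casson--Walker invariant, and $\hfp(S^3_0(J))$ with its absolute grading and $\spc$-decomposition, so all of these agree with those of $S^3_0(K)$.

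Next I would pin down the knot Floer homology of $J$. Since $g(J)=1$, the adjunction inequality and genus detection leave only the torsion $\spc$ structure $\spinc_0$ carrying nonzero $\hfp$ on $S^3_0(J)$, and by the structure of $\cfkinfty$ for genus-$1$ knots it is a $\tau$-staircase plus $b\ge0$ box summands, with $\dim\hfkhat(J,1)=b+\lvert\tau(J)\rvert$. Running the integer surgery formula for slope $0$ on this model expresses $\hfp(S^3_0(J),\spinc_0)$ --- in particular its reduced part and $d$-invariants --- as an explicit function of $b$ and $\tau(J)$ that strictly grows with $b$ (the $d$-invariants moreover determine $\tau(J)$); comparing with $S^3_0(K)$ and using $\dim\hfkhat(J,1)\ge2$ forces $\dim\hfkhat(J,1)=2$, which is the mechanism behind the $5_2$ case of \cite{bs-characterizing}. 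By the classification of genus-$1$ knots whose knot Floer homology is $2$-dimensional in the top Alexander grading, $J$ is then $15n_{43522}$, $\Wh^\pm(T_{2,3},2)$, $5_2$, some $P(-3,3,2m+1)$, or a mirror of one of these. (Checking that this genus-$1$ surgery computation really recovers $b$ from the homeomorphism type of $S^3_0(J)$ is one of the two places the argument has to work hard.)

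It remains to show $K'\mapsto S^3_0(K')$ is injective on this list, and this is the main obstacle. Knots with distinct $\cfkinfty$ --- e.g.\ $5_2$ and its mirror, whose $\cfkinfty$ contains a nontrivial staircase --- are already separated by the invariants above. But the bulk of the list, namely the entire family $P(-3,3,2n+1)$ together with the doubles $\Wh^\pm(T_{2,3},2)$, the knot $15n_{43522}$, and their mirrors, all share $\Delta\doteq 2t-5+2t^{-1}$ and the same $\cfkinfty$ (two boxes, $\tau=0$), hence the same $\hfp(S^3_0(\cdot))$ and Casson--Walker invariant, so these must be told apart geometrically. For the pretzels I would fix the link $P(-3,3,1)\cup C$ with $C$ an unknot encircling the twist region, so that $S^3_0(P(-3,3,2n+1))$ is the Dehn filling along slope $1/n$ of the one-cusped manifold $N$ obtained by $0$-filling the pretzel cusp of the complement of $P(-3,3,1)\cup C$; once $N$ is checked to be hyperbolic, Thurston's hyperbolic Dehn surgery theorem makes these fillings hyperbolic with volumes strictly increasing to $\mathrm{vol}(N)$, hence pairwise distinct, for all $|n|$ large. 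The $0$-surgeries on the Whitehead doubles are toroidal (the companion torus stays incompressible), so are not homeomorphic to the hyperbolic pretzel surgeries; the remaining finitely many small-$n$ pretzels, the knot $15n_{43522}$, and all the mirrors are then separated by combining hyperbolic volume, JSJ decompositions, and the discrete invariants above in a finite computation. The delicate point throughout is that no knot Floer, Casson--Walker, or Alexander-polynomial invariant of the $0$-surgery detects $n$, so the injectivity really has to come from hyperbolic geometry, with the small-$n$ exceptions handled by hand. Assembling the three steps yields $J=K$.
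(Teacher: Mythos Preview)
Your reduction to the nearly-fibered list is essentially correct and matches the paper (modulo one factual slip: the list actually splits by Alexander polynomial, with $15n_{43522}$ and $\Wh^-(T_{2,3},2)$ having $\Delta=2t-3+2t^{-1}$, not $-2t+5-2t^{-1}$; this only helps you). The serious gap is your plan to separate the pretzel $0$-surgeries by hyperbolic volume. This cannot work: for \emph{every} nontrivial genus-$1$ knot $K$, the capped-off Seifert surface $\hat\Sigma\subset S^3_0(K)$ is a non-separating torus, and it is incompressible because compressing it would yield a non-separating sphere, contradicting Gabai's irreducibility of $S^3_0(K)$. Hence each $S^3_0(P(-3,3,2n+1))$ is toroidal --- in fact a graph manifold, since cutting along $\hat\Sigma$ gives the $(2,4)$-torus link complement by Cantwell--Conlon --- and is not hyperbolic for \emph{any} $n$. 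In particular your one-cusped $N$ cannot be hyperbolic either, since every one of its $1/n$-fillings would then be exceptional; the ``once $N$ is checked to be hyperbolic'' step simply fails. Hyperbolic geometry therefore sees nothing here, and as you correctly observe, neither does Heegaard Floer nor the Casson--Walker invariant.

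The paper's substitute is Ohtsuki's perturbative invariant $\lambda_1(S^3_0(K);0)$ for $3$-manifolds with $b_1=1$, computed from the $2$-loop polynomial of $K$: for $K=P(-3,3,2n+1)$ one gets $\lambda_1=(2n+1)/16$, which distinguishes all $n$ at once. The same toroidality also means your residual finite checks cannot be done by volume; the paper instead compares JSJ decompositions ($S^3_0(\Wh^+(T_{2,3},2))$ has two JSJ pieces, one of them a trefoil exterior, whereas each pretzel $0$-surgery has only one), uses a Casson--Gordon signature argument to rule out $K\cong\mirror{K}$ in the determinant-$7$ family, and counts index-$6$ subgroups of $\pi_1$ to separate $S^3_0(15n_{43522})$ from $S^3_0(\Wh^-(T_{2,3},2))$.
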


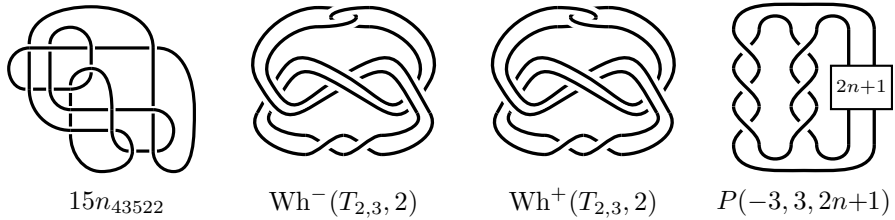
\begin{figure}
\begin{tikzpicture}[scale=0.85,font=\small]
\begin{scope}
\begin{scope}[scale=0.3,very thick] 
    \draw (5.78, 2.76) .. controls (6.31, 2.76) and (6.66, 2.25) .. (6.66, 1.68);
    \draw (6.66, 1.68) .. controls (6.66, 0.97) and (5.86, 0.61) .. 
          (5.05, 0.61) .. controls (4.09, 0.61) and (3.43, 1.54) .. (3.43, 2.56);
    \draw (3.43, 2.96) .. controls (3.43, 3.19) and (3.43, 3.41) .. (3.43, 3.64);
    \draw (3.43, 4.03) .. controls (3.43, 4.26) and (3.43, 4.49) .. (3.43, 4.71);
    \draw (3.43, 5.11) .. controls (3.43, 5.64) and (3.95, 5.99) .. (4.51, 5.99);
    \draw (4.51, 5.99) .. controls (5.39, 5.99) and (5.59, 4.87) .. (5.59, 3.84);
    \draw (5.59, 3.84) .. controls (5.59, 3.48) and (5.59, 3.12) .. (5.59, 2.76);
    \draw (5.59, 2.76) .. controls (5.59, 2.20) and (5.94, 1.68) .. (6.46, 1.68);
    \draw (6.86, 1.68) .. controls (7.09, 1.68) and (7.31, 1.68) .. (7.54, 1.68);
    \draw (7.94, 1.68) .. controls (8.46, 1.68) and (8.81, 2.20) .. 
          (8.81, 2.76) .. controls (8.81, 3.35) and (8.33, 3.84) .. (7.74, 3.84);
    \draw (7.74, 3.84) .. controls (7.09, 3.84) and (6.43, 3.84) .. (5.78, 3.84);
    \draw (5.39, 3.84) .. controls (4.74, 3.84) and (4.08, 3.84) .. (3.43, 3.84);
    \draw (3.43, 3.84) .. controls (2.84, 3.84) and (2.36, 4.32) .. (2.36, 4.91);
    \draw (2.36, 4.91) .. controls (2.36, 5.56) and (2.36, 6.21) .. (2.36, 6.87);
    \draw (2.36, 7.26) .. controls (2.36, 7.79) and (2.87, 8.14) .. 
          (3.43, 8.14) .. controls (4.03, 8.14) and (4.51, 7.66) .. (4.51, 7.06);
    \draw (4.51, 7.06) .. controls (4.51, 6.77) and (4.51, 6.48) .. (4.51, 6.19);
    \draw (4.51, 5.79) .. controls (4.51, 5.26) and (3.99, 4.91) .. (3.43, 4.91);
    \draw (3.43, 4.91) .. controls (3.14, 4.91) and (2.85, 4.91) .. (2.56, 4.91);
    \draw (2.16, 4.91) .. controls (1.93, 4.91) and (1.71, 4.91) .. (1.48, 4.91);
    \draw (1.08, 4.91) .. controls (0.56, 4.91) and (0.21, 5.43) .. 
          (0.21, 5.99) .. controls (0.21, 6.58) and (0.69, 7.06) .. (1.28, 7.06);
    \draw (1.28, 7.06) .. controls (1.64, 7.06) and (2.00, 7.06) .. (2.36, 7.06);
    \draw (2.36, 7.06) .. controls (3.01, 7.06) and (3.66, 7.06) .. (4.31, 7.06);
    \draw (4.71, 7.06) .. controls (5.65, 7.06) and (6.60, 7.06) .. (7.54, 7.06);
    \draw (7.94, 7.06) .. controls (9.32, 7.06) and (9.89, 5.43) .. 
          (9.89, 3.84) .. controls (9.89, 2.37) and (9.89, 0.61) .. 
          (8.81, 0.61) .. controls (8.22, 0.61) and (7.74, 1.09) .. (7.74, 1.68);
    \draw (7.74, 1.68) .. controls (7.74, 2.34) and (7.74, 2.99) .. (7.74, 3.64);
    \draw (7.74, 4.03) .. controls (7.74, 5.04) and (7.74, 6.05) .. (7.74, 7.06);
    \draw (7.74, 7.06) .. controls (7.74, 8.50) and (6.13, 9.21) .. 
          (4.51, 9.21) .. controls (2.92, 9.21) and (1.28, 8.65) .. (1.28, 7.26);
    \draw (1.28, 6.87) .. controls (1.28, 6.21) and (1.28, 5.56) .. (1.28, 4.91);
    \draw (1.28, 4.91) .. controls (1.28, 3.72) and (2.25, 2.76) .. (3.43, 2.76);
    \draw (3.43, 2.76) .. controls (4.08, 2.76) and (4.74, 2.76) .. (5.39, 2.76);
\end{scope}
\node at (1.75,-0.3) {$15n_{43522}$};
\end{scope}

\begin{scope}[xshift=5.3cm,yshift=2.6cm] 
\draw[link] (-0.4,0.15) arc (90:0:0.6 and 0.15) ++ (-0.4,0) arc (180:270:0.6 and 0.15); 
\draw[link] (0.4,0.15) arc (90:180:0.6 and 0.15) ++ (0.4,0) arc (360:270:0.6 and 0.15);
\draw[link] (-1.35,-1.5) to[out=60,in=150] (0,-0.9) to[out=330,in=240] (1.15,-1) to[out=60,in=0,looseness=1] (0.4,-0.15); 
\draw[link] (-1.15,-1.5) to[out=60,in=150] (0,-1.1) to[out=330,in=240] (1.35,-1) to[out=60,in=0,looseness=1] (0.4,0.15);
\draw[link] (-0.4,0.15) to[out=180,in=120,looseness=1] (-1.35,-1) to[out=300,in=210] (0,-1.1) to[out=30,in=120] (1.15,-1.5); 
\draw[link] (-0.4,-0.15) to[out=180,in=120,looseness=1] (-1.15,-1) to[out=300,in=210] (0,-0.9) to[out=30,in=120] (1.35,-1.5);
\begin{scope} 
\clip (-0.5,-1.5) rectangle (0.5,-0.5);
\draw[link] (-1.35,-1.5) to[out=60,in=150] (0,-0.9) to[out=330,in=240] (1.15,-1);
\draw[link] (-1.15,-1.5) to[out=60,in=150] (0,-1.1) to[out=330,in=240] (1.35,-1);
\end{scope}
\draw[link,looseness=1] (-1.35,-1.5) to[out=240,in=180] (-0.6,-2.15) ++ (1.2,0) to[out=0,in=300] (1.35,-1.5);
\draw[link,looseness=1] (-1.15,-1.5) to[out=240,in=180] (-0.6,-1.85) ++ (1.2,0) to[out=0,in=300] (1.15,-1.5);
\draw[link,looseness=0.75] (-0.6,-1.85) to[out=0,in=180] ++(0.6,-0.3) ++(0,0.3) to[out=0,in=180] ++(0.6,-0.3); 
\draw[link,looseness=0.75] (-0.6,-2.15) to[out=0,in=180] ++(0.6,0.3) ++(0,-0.3) to[out=0,in=180] ++(0.6,0.3);
\node at (0,-2.9) {$\Wh^-(T_{2,3},2)$};
\end{scope}

\begin{scope}[xshift=9.05cm,yshift=2.6cm] 
\draw[link] (0.4,0.15) arc (90:180:0.6 and 0.15) ++ (0.4,0) arc (360:270:0.6 and 0.15); 
\draw[link] (-0.4,0.15) arc (90:0:0.6 and 0.15) ++ (-0.4,0) arc (180:270:0.6 and 0.15);
\draw[link] (-1.35,-1.5) to[out=60,in=150] (0,-0.9) to[out=330,in=240] (1.15,-1) to[out=60,in=0,looseness=1] (0.4,-0.15); 
\draw[link] (-1.15,-1.5) to[out=60,in=150] (0,-1.1) to[out=330,in=240] (1.35,-1) to[out=60,in=0,looseness=1] (0.4,0.15);
\draw[link] (-0.4,0.15) to[out=180,in=120,looseness=1] (-1.35,-1) to[out=300,in=210] (0,-1.1) to[out=30,in=120] (1.15,-1.5); 
\draw[link] (-0.4,-0.15) to[out=180,in=120,looseness=1] (-1.15,-1) to[out=300,in=210] (0,-0.9) to[out=30,in=120] (1.35,-1.5);
\begin{scope} 
\clip (-0.5,-1.5) rectangle (0.5,-0.5);
\draw[link] (-1.35,-1.5) to[out=60,in=150] (0,-0.9) to[out=330,in=240] (1.15,-1);
\draw[link] (-1.15,-1.5) to[out=60,in=150] (0,-1.1) to[out=330,in=240] (1.35,-1);
\end{scope}
\draw[link,looseness=1] (-1.35,-1.5) to[out=240,in=180] (-0.6,-2.15) ++ (1.2,0) to[out=0,in=300] (1.35,-1.5);
\draw[link,looseness=1] (-1.15,-1.5) to[out=240,in=180] (-0.6,-1.85) ++ (1.2,0) to[out=0,in=300] (1.15,-1.5);
\draw[link,looseness=0.75] (-0.6,-1.85) to[out=0,in=180] ++(0.6,-0.3) ++(0,0.3) to[out=0,in=180] ++(0.6,-0.3); 
\draw[link,looseness=0.75] (-0.6,-2.15) to[out=0,in=180] ++(0.6,0.3) ++(0,-0.3) to[out=0,in=180] ++(0.6,0.3);
\node at (0,-2.9) {$\Wh^+(T_{2,3},2)$};
\end{scope}

\begin{scope}[xshift=12.5cm,yshift=2.4cm]
\draw[link,looseness=1] (-1.1,0) to[out=90,in=180] ++(1.1,0.4) to[out=0,in=90] ++(1.1,-0.4);
\draw[link,looseness=1.5] (-0.7,0) to[out=90,in=90] ++(0.5,0) ++(0.4,0) to[out=90,in=90] ++(0.5,0);
\foreach \i in {0,-0.6,-1.2} {
  \draw[link,looseness=0.75] (-0.7,\i) to[out=270,in=90] ++(-0.4,-0.6) (-0.2,\i) to[out=270,in=90] ++(0.4,-0.6);
  \draw[link,looseness=0.75] (-1.1,\i) to[out=270,in=90] ++(0.4,-0.6) (0.2,\i) to[out=270,in=90] ++(-0.4,-0.6);
}
\draw[link] (0.7,0) -- ++(0,-1.8) ++(0.4,0) -- ++(0,1.8);
\node[draw,thick,rectangle,fill=white,inner sep=2pt,minimum height=1.5em] at (0.9,-0.9) {\tiny$2n{+}1$};
\draw[link,looseness=1.5] (-0.7,-1.8) to[out=270,in=270] ++(0.5,0) ++(0.4,0) to[out=270,in=270] ++(0.5,0);
\draw[link,looseness=1] (-1.1,-1.8) to[out=270,in=180] ++(1.1,-0.4) to[out=0,in=270] ++(1.1,0.4);
\node at (0,-2.7) {$P(-3,3,2n{+}1)$};
\end{scope}

\end{tikzpicture}
\caption{The knots that Theorem~\ref{thm:main} says are characterized by their $0$-surgeries.}
\label{fig:main-knots}
\end{figure}

Here, $\Wh^\pm(T_{2,3},2)$ is the $2$-twisted Whitehead double of the right-handed trefoil, with a positive or a negative clasp, respectively, and the $P(-3,3,2n+1)$ are pretzel knots.  See Figure~\ref{fig:main-knots}.

By contrast, there are many knots that are not characterized by their $0$-surgeries.  Brakes \cite{brakes} gave the first pairs of examples, and later Osoinach \cite{osoinach} used annulus twisting to construct infinite families of examples.  In fact, there can be  infinitely many knots $K_n$ with pairwise diffeomorphic $0$-traces $X_0(K_n)$, the result of attaching a 0-framed 2-handle to $B^4$ along $K_n$ \cite{AJOT}.  Knots which are not smoothly concordant, or which have different slice genera, can nonetheless have diffeomorphic $0$-surgeries \cite{yasui-corks} or even $0$-traces \cite{miller-piccirillo,piccirillo-shake}. Indeed, Piccirillo \cite{piccirillo-conway} famously proved that the Conway knot is not slice by exhibiting a non-slice knot with the same $0$-trace.  Recently, Manolescu and Piccirillo \cite{manolescu-piccirillo} have given a systematic construction of pairs of knots with the same $0$-surgeries, and used it as a source of potentially exotic $4$-spheres.

In general, a  major difficulty in  Floer-theoretic approaches to proving that some integral slope characterizes a knot $K$ is that  one must first identify all knots with the same knot Floer homology as $K$, and this was out of reach until recently for all but a  handful of knots.  However, Theorem~\ref{thm:main} is made possible by our  recent classification \cite{bs-nonfibered} of all  genus-1 \emph{nearly fibered} knots:

\begin{theorem}[{\cite[Theorem~1.2]{bs-nonfibered}}] \label{thm:nf}
Let $K \subset S^3$ be a genus-1 knot with $\dim_\Q \hfkhat(K,1) = 2$.  Then up to mirroring $K$ must be one of
\begin{equation} \label{eq:nf-plus2t}
5_2, \ 15n_{43522}, \ \Wh^-(T_{2,3},2)
\end{equation}
or
\begin{equation} \label{eq:nf-minus2t}
\Wh^+(T_{2,3},2), \ P(-3,3,2n+1) \ (n\in\Z),
\end{equation}
where the knots in \eqref{eq:nf-plus2t} have Alexander polynomial $\Delta_K(t) = 2t-3+2t^{-1}$ and determinant $|\Delta_K(-1)|=7$, and those in \eqref{eq:nf-minus2t} have Alexander polynomial $\Delta_K(t) = -2t+5-2t^{-1}$ and determinant $|\Delta_K(-1)|=9$.
\end{theorem}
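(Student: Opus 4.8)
The plan is to reduce the classification to an analysis of the sutured complement of a genus-1 Seifert surface, and then to a concrete description of that surface as a plumbing of two annuli.

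First I would pin down the Alexander polynomial. Since the graded Euler characteristic of $\hfkhat$ recovers $\Delta_K$ and $K$ has genus $1$, the top coefficient of $\Delta_K$ equals $\pm\chi\big(\hfkhat(K,1)\big)$; with $\dim_\Q\hfkhat(K,1)=2$ this coefficient is $\pm 2$, and the normalization $\Delta_K(1)=1$ together with symmetry forces $\Delta_K(t)=2t-3+2t^{-1}$ or $\Delta_K(t)=-2t+5-2t^{-1}$, precisely the two cases in \eqref{eq:nf-plus2t} and \eqref{eq:nf-minus2t}. Next, fix a genus-1 Seifert surface $R$ (a once-punctured torus) and cut the knot complement along $R$ to obtain the sutured manifold $M=S^3(R)$. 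By Juh\'asz's identification one has $\SFH(M)\cong\hfkhat(K,1)$, so $\dim_\Q\SFH(M)=2$, and $M$ is taut because $R$ is genus-minimizing.

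The structural heart of the argument is to classify taut sutured complements of once-punctured tori with $\dim\SFH=2$. Since product sutured manifolds are exactly those with $\dim\SFH=1$ (Juh\'asz's product detection), such an $M$ is as close to a product as possible without being one. I would decompose $M$ along a maximal collection of product annuli, controlling the effect on homology via the sutured decomposition formula, to arrive at a reduced sutured manifold carrying the same rank-$2$ invariant. Being reduced, taut, and of minimal nonzero complexity should force a normal form for $R$: it is a Murasugi sum (plumbing) of two annuli $A_1,A_2\subset S^3$ with cores $c_1,c_2$ forming a symplectic basis of $H_1(R)$ and with twisting parameters $p,q$. The Seifert form is then $\left(\begin{smallmatrix} p&1\\0&q\end{smallmatrix}\right)$, whose determinant $\det(V-tV^{T})=pq\,t^2+(1-2pq)t+pq$ recovers $\Delta_K$ only when $pq=\pm2$, so $\{|p|,|q|\}=\{1,2\}$.

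It then remains to determine which embeddings of the cores can occur. The annulus with unit twisting is a Hopf band, a fiber surface, while the rank-$2$ constraint must be shown to allow the other core to deviate from the unknot in only tightly controlled ways: unknotted cores reproduce the twist knot $5_2$ and the pretzels $P(-3,3,2n+1)$, letting one core follow the right-handed trefoil yields the Whitehead doubles $\Wh^\pm(T_{2,3},2)$, and the single remaining configuration gives $15n_{43522}$. The main obstacle is precisely this rigidity: proving that $\dim\SFH=2$ constrains the embedding of the non-Hopf core so severely that the list above is complete. This is where I would expect the real work to lie, combining the sutured Floer dimension bound with Gabai's sutured hierarchy to exclude any further essential annuli or tori in $M$, together with a careful analysis of the plumbing and gluing data. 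Throughout, the hypothesis is invariant under mirroring since $\hfkhat(\overline{K},1)$ is dual to $\hfkhat(K,1)$, so the classification is naturally stated up to mirrors.
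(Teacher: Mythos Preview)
The theorem you are attempting to prove is not proved in this paper at all: it is quoted from \cite{bs-nonfibered} and used here as a black box, so there is no proof in the present paper to compare your sketch against.

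That said, your outline has the right starting point---the identification $\SFH(S^3(R)) \cong \hfkhat(K,1)$ and the observation that rank~$2$ means ``nearly a product''---but two steps are problematic. First, your Alexander polynomial computation is incomplete: if $\dim_\Q \hfkhat(K,1) = 2$ then $\chi(\hfkhat(K,1)) \in \{-2,0,2\}$, and you have not excluded the possibility $\chi = 0$ (i.e.\ $\Delta_K(t)=1$); in \cite{bs-nonfibered} this is ruled out only \emph{a posteriori}, as a consequence of the classification rather than an input to it. Second, the reduction to a plumbing of two annuli with Seifert matrix $\left(\begin{smallmatrix} p & 1 \\ 0 & q \end{smallmatrix}\right)$ is not how the argument in \cite{bs-nonfibered} actually runs: there one shows instead that the sutured complement of $R$ is homeomorphic to one of three explicit link exteriors (of the trefoil, the $(2,4)$-torus link, or the $(2,4)$-cable of the trefoil) with prescribed sutures, and then reconstructs $K$ by analyzing how a thickened $R$ can be glued back in. Your plumbing heuristic and Seifert-form calculation do not by themselves control the embedding of the band cores in $S^3$, which is precisely where the difficulty lies; the ``rigidity'' step you correctly flag as the main obstacle is in fact the entire content of the theorem.
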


For example, we were able to use this classification to prove in \cite{bs-characterizing} that  all rational slopes besides the positive integers (i.e., not just $0$) are characterizing for $5_2$:

\begin{theorem}[{\cite[Theorem~1.1]{bs-characterizing}}] \label{thm:5_2}
Every $r\in \Q\setminus \Z_{>0}$ is a characterizing slope for $5_2$.
\end{theorem}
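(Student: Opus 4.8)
The plan is to use the surgered manifold $S^3_r(5_2)$ to recover enough of the knot Floer package of any $J$ with $S^3_r(J)\cong S^3_r(5_2)$ to place $J$ among the genus-$1$ nearly fibered knots via Theorem~\ref{thm:nf}, and then to isolate $5_2$ among that finite list. Fix $r\in\Q\setminus\Z_{>0}$ and an orientation-preserving homeomorphism $S^3_r(J)\cong S^3_r(5_2)$. Since $5_2$ is alternating it is thin, so $\cfkinfty(5_2)$ is a staircase (with $\tau=1$) together with a single box, of total rank $7$; in particular the manifolds $S^3_r(5_2)$ and all of their Heegaard Floer invariants are completely explicit.

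First I would extract the structural invariants of $J$. The homeomorphism identifies $\hfp(S^3_r(J))$ with $\hfp(S^3_r(5_2))$ as absolutely graded groups, compatibly with $\spinc$ structures and the homology orientation, and through the Ozsv\'ath--Szab\'o rational surgery formula this homology is determined by $\cfkinfty(J)$. Comparing graded Euler characteristics (equivalently the Casson--Walker invariant and Turaev torsion) forces $\Delta_J=\Delta_{5_2}=2t-3+2t^{-1}$, so $g(J)\ge\deg\Delta_J=1$; comparing correction terms and reduced ranks recovers the integers $V_s(J),H_s(J)$ and the number of box summands, hence bounds the total rank of $\hfkhat(J)$. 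The crucial point is to also recover the top Alexander grading of $\hfkhat(J)$ from the mapping cone, giving the upper bound $g(J)\le 1$ and $\dim_\Q\hfkhat(J,1)=2$. With these in hand, Theorem~\ref{thm:nf} applies and shows that, up to mirroring, $J$ is one of $5_2,\,15n_{43522},\,\Wh^-(T_{2,3},2)$.

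Next I would eliminate all candidates but $5_2$ itself. The knots $15n_{43522}$ and $\Wh^-(T_{2,3},2)$ are not thin, so their knot Floer homology has rank strictly greater than $7$; the rank computation of the previous step therefore rules them out, leaving only $5_2$ and its mirror $\overline{5_2}$. To separate these two I would use orientation-sensitive data: since $S^3_r(\overline{5_2})\cong -S^3_{-r}(5_2)$, the absolute gradings and the sequence of correction terms of $S^3_r(5_2)$ cannot match those forced by $\overline{5_2}$ when $r\notin\Z_{>0}$, so for exactly these slopes $J$ is not the mirror and hence $J=5_2$. The positive integers mark the genuine boundary of the method, where the relevant Floer invariants of $5_2$ and a competing knot coincide.

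I expect the main obstacle to be the inversion of the surgery formula carried out in the second paragraph, and specifically the upper genus bound $g(J)\le 1$: one must show that a single homeomorphism type $S^3_r(J)\cong S^3_r(5_2)$, for an \emph{arbitrary} slope --- including small negative and fractional $r$, where the mapping cone is most intricate and the large-surgery shortcut is unavailable --- actually determines the top knot Floer group of $J$ rather than merely constraining it. Verifying this uniformly across all $r\in\Q\setminus\Z_{>0}$, together with checking that the orientation-sensitive comparison excludes $\overline{5_2}$ for precisely these slopes, is where the real work lies.
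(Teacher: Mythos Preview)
This theorem is not proved in the present paper; it is quoted from \cite{bs-characterizing} and used as a black box in the proof of Theorem~\ref{thm:det-7}, so there is no in-paper argument to compare your proposal against.  The paper does confirm that the argument in \cite{bs-characterizing} rests on the nearly fibered classification (Theorem~\ref{thm:nf}) together with the mapping cone formula, so your broad strategy --- invert the surgery formula to constrain $\hfkhat(J)$, invoke Theorem~\ref{thm:nf}, then separate the finitely many candidates --- is aimed in the right direction.

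That said, what you have written is a plan rather than a proof, and your elimination of $15n_{43522}$ and $\Wh^-(T_{2,3},2)$ has a real gap.  You assert that these knots are not thin, hence have $\dim\hfkhat>7$, and that ``the rank computation of the previous step therefore rules them out.''  But in that previous step you only claimed to extract $g(J)$, $\Delta_J$, $\dim\hfkhat(J,1)$, the integers $V_s(J)$, and \emph{bounds} on the total rank; you never established that a single $r$-surgery determines $\dim\hfkhat(J)$ on the nose, and in general it need not, since distinct $\cfkinfty$ complexes can yield the same $\hfp$ of a fixed surgery.  All three candidates share $\Delta_K(t)=2t-3+2t^{-1}$ and $\dim\hfkhat(\,\cdot\,,1)=2$, so the invariants you have actually recovered do not yet distinguish them --- something finer about $\cfkinfty$ (or an invariant outside Heegaard Floer) is needed here, and you have not supplied it.  You rightly flag the inversion step (obtaining $g(J)\le1$ uniformly over all $r\in\Q\setminus\Z_{>0}$) as the main technical obstacle, but the candidate-elimination step is where your sketch currently breaks, and your exclusion of $\mirror{5_2}$ is likewise only asserted, not argued.
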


We do not expect  anything as strong as Theorem~\ref{thm:5_2} to hold for the knots in Theorem~\ref{thm:main}.  Indeed, Baker and Motegi \cite[Example~4.1]{baker-motegi} proved that $P(-3,3,5)$ is not characterized by any non-zero integer surgeries.  On the other hand, Theorem~\ref{thm:main} gives an affirmative answer to \cite[Question~4.4]{baker-motegi}, which asked whether $0$ might be a characterizing slope for $P(-3,3,5)$.

In this paper we assume some background in Heegaard Floer homology, but the Floer-theoretic techniques we use were all present in \cite{bs-characterizing}; the casual reader may be relieved to know that unlike in \cite{bs-characterizing}, we make no use of the ``mapping cone'' formula for the Heegaard Floer homology of surgeries on a knot.  On the other hand, Floer theoretic invariants cannot distinguish the $0$-surgeries on any of the pretzel knots $P(-3,3,2n+1)$, so we will eventually need to introduce some perturbative invariants defined by Ohtsuki \cite{ohtsuki-perturbative} which can tell them apart.

\subsection*{Organization}

Theorem~\ref{thm:main} is proved in several steps.  In Section~\ref{sec:generalities} we prove some general facts about $0$-surgery on knots of genus one, and then we use these in Section~\ref{sec:det-7} to prove Theorem~\ref{thm:det-7}, stating that $0$-surgery characterizes $15n_{43522}$ and $\Wh^-(T_{2,3},2)$ as well as their mirrors.  In Section~\ref{sec:det-9-1}, we use JSJ decompositions to deal with $\Wh^+(T_{2,3},2)$ and its mirror in Theorem~\ref{thm:wh}.  Then in Section~\ref{sec:det-9-2} we use Ohtsuki's invariants to prove in Theorem~\ref{thm:pretzels} that $0$ is a characterizing slope for each of the pretzel knots $P(-3,3,2n+1)$. We prove as a bonus in Proposition \ref{prop:pretzel-nonzero} that $r$-surgery distinguishes these pretzel knots for any  $r\in\Q$.

\subsection*{Acknowledgments}

We thank Tam Cheetham-West and Alan Reid for some interesting conversations which inspired this work, and in particular for sharing a draft of Tam's article \cite{cheetham-west}.  We also thank the referee for helpful feedback on the initial version of this paper.  JAB was supported by NSF FRG Grant DMS-1952707.

\section{Zero-surgery on genus-one knots} \label{sec:generalities}

We begin by introducing some general results that will let us reduce Theorem~\ref{thm:main} to the case where $J$ is one of the knots listed in Theorem~\ref{thm:nf}.

\begin{proposition} \label{prop:genus-1-general}
Let $K \subset S^3$ be a knot with Seifert genus $1$, and suppose for some other knot $J \subset S^3$ that there is an orientation-preserving homeomorphism
\[ S^3_0(K) \cong S^3_0(J). \]
Then $J$ has genus 1 and the same Alexander polynomial as $K$, and moreover
\[ \dim_\F \hfkhat(K,1) = \dim_\F \hfkhat(J,1) \]
over any field $\F$.
\end{proposition}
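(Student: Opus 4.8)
The plan is to extract all the needed information from the $0$-surgery via well-understood topological and Floer-theoretic invariants. The first step is to pin down the genus of $J$. Since $K$ has genus $1$, $S^3_0(K)$ is a closed $3$-manifold with $H_1 \cong \Z$, and by Gabai's theorem \cite{gabai-foliations3} the Thurston norm of the generator of $H_2(S^3_0(K))$ equals $2g(K)-1 = 1$ (equivalently, $S^3_0(K)$ contains a genus-one fiber-like surface but no sphere or torus representing the generator). An orientation-preserving homeomorphism $S^3_0(K) \cong S^3_0(J)$ carries this norm information across, so the same is true for $J$; Gabai's genus-detection result then forces $g(J) = 1$. In particular $J$ is nontrivial, and both $S^3_0(K)$ and $S^3_0(J)$ are irreducible.

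Next I would recover the Alexander polynomial. The Alexander polynomial of a knot is determined by the homology of the infinite cyclic cover, which is intrinsic to $S^3_0(K)$ (it is the order of $H_1$ of the universal abelian cover of $S^3_0(K)$, viewed as a $\Z[t,t^{-1}]$-module, up to units and multiplication by $t^{\pm1}-1$ factors — more cleanly, $S^3_0(K)$ determines the Alexander polynomial of $K$ up to sign and powers of $t$, and both are genuine knot Alexander polynomials, which removes the ambiguity since $\Delta_K(1)=1$). Hence $\Delta_J \doteq \Delta_K$, and since both are symmetric with $\Delta(1) = 1$ we get equality.

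For the statement about $\hfkhat$ in the top grading, the key input is the surgery formula relating $\HFhat$ (or $\hfp$) of $S^3_0(K)$ to the knot Floer homology of $K$. Working in the top Spin$^c$ structure $\spinc_1$ on $S^3_0(K)$ with $\langle c_1(\spinc_1), [\widehat{F}]\rangle = 2g-1 = 1$ \emph{wait} — more precisely, Ozsváth–Szabó's large surgery / integer surgery results identify $\hfp(S^3_0(K), \spinc_s)$ for $s \ne 0$ with the homology of a subquotient complex of $\cfk^\infty(K)$, and for a genus-$1$ knot the topmost nonzero such group in the appropriate grading recovers $\dim_\F \hfkhat(K,1)$. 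Concretely, for $|s| = g - 1$... for genus one this is $s = 0$, which is the awkward symmetric case, so instead I would use the standard fact that $\HFhat(S^3_0(K), \spinc)$ in the Spin$^c$ structure evaluating to $\pm(2g-2) = 0$... let me reorganize: the cleanest route is via the large-$N$ surgery formula, $\HFhat(S^3_N(K), [s]) \cong H_*(\widehat{A}_s)$ for $N \gg 0$, where $\widehat{A}_s$ is the ``hook'' subquotient of $\cfk^\infty$; for $s = g = 1$ one has $H_*(\widehat{A}_1) \cong \hfkhat(K,1)$. But we only know $S^3_0$, not $S^3_N$. So the genuinely correct tool is the identification of $\hfp(S^3_0(K), \spinc_1)$, where $\spinc_1$ is the Spin$^c$ structure with $\langle c_1, \widehat{F}\rangle = 2 \cdot 1 = \pm 2$... hmm, for genus one $2g - 2 = 0$ too. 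Let me just say: by \cite[Corollary 4.5]{os-knot-invariants} and its refinements, $\hfp(S^3_0(K))$ in the top Spin$^c$ structure supported on the Thurston-norm-minimizing surface determines, and is determined by, $H_*$ of a subcomplex of $\cfk^\infty(K)$ whose homology has rank $\dim_\F \hfkhat(K,g) = \dim_\F \hfkhat(K,1)$ after accounting for the tower. Since the homeomorphism is orientation-preserving it respects this Spin$^c$ structure (up to conjugation, which acts trivially here), so the two ranks agree. Carrying this out over an arbitrary field $\F$ requires only that the surgery formula holds with $\F$ coefficients, which it does.

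\emph{The main obstacle} I anticipate is the genus-one edge case in the surgery formula: because $2g - 1 = 1$ is small, the relevant Spin$^c$ structure(s) on $S^3_0$ sit right at the boundary of the range where the clean ``large surgery'' identification applies, so some care is needed to name the correct $\spinc$ structure and the correct grading, and to argue that the rank extracted there is exactly $\dim_\F \hfkhat(K,1)$ and not, say, that quantity plus a contribution from $\hfkhat(K,0)$ or from the Maslov-graded tower. I expect the resolution to use the genus-one hypothesis heavily — e.g. that $\cfk^\infty(K)$ is determined in low filtration levels by $\Delta_K$ together with $\dim\hfkhat(K,1)$ — together with the fact, already available to us, that $S^3_0(K)$ and $S^3_0(J)$ are orientation-preservingly homeomorphic, so \emph{every} Floer invariant of the two agrees on the nose. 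The Alexander-polynomial and genus steps are routine; this last rank comparison is where the real content lies.
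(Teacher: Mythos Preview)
Your treatment of the genus and the Alexander polynomial is correct and matches the paper. The gap is exactly where you feared it would be: for a genus-one knot the Spin$^c$ structure on $S^3_0(K)$ with $\langle c_1(\spinc),[\hat F]\rangle = 2(g-1) = 0$ is the torsion one, and \cite[Corollary~4.5]{osz-knot} does not apply there---that identification requires a non-torsion Spin$^c$ structure, and in the torsion case $\hfp$ carries an honest infinite tower that is not cleanly separated from the $\hfkhat$ contribution. Your ``let me just say'' paragraph is not a proof, and the proposed fix (structural control of $\cfk^\infty$ for genus-one knots plus ``accounting for the tower'') is too vague: you have not said which invariant of $S^3_0(K)$ actually equals $\dim_\F\hfkhat(K,1)$, and indeed the paper never produces such a formula.

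The paper's route is different from anything you attempted. It first extracts $V_0(K)=V_0(J)$ from the correction terms $d_{\pm 1/2}(S^3_0(K))$. Then, using a lemma from \cite{bs-characterizing} specific to genus one, it shows that $\hfred(S^3_1(K))$ has trivial $U$-action and dimension $\dim\hfkhat(K,1)-V_0(K)$, whence $\dim\hfhat(S^3_1(K)) = 2\big(\dim\hfkhat(K,1)-V_0(K)\big)+1$. The surgery exact triangle relating $\hfhat(S^3)$, $\hfhat(S^3_0(K))$, and $\hfhat(S^3_1(K))$ then pins down $\dim\hfhat(S^3_0(K))$ only up to $\pm 1$, so after matching with $J$ one gets $\dim\hfkhat(K,1)-\dim\hfkhat(J,1)\in\{-1,0,1\}$. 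The final step is a parity argument: since $\chi(\hfkhat(K,1))$ equals the top Alexander coefficient, which you already know agrees for $K$ and $J$, the two dimensions have the same parity and the difference must be zero. The point you missed is that one need not read $\dim\hfkhat(K,1)$ directly off $S^3_0(K)$; it is enough to bound the discrepancy and then kill it with parity.
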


\begin{proof}
The manifold $S^3_0(J)$ determines the Alexander polynomial of $J$, because the infinite cyclic covers of both $S^3_0(J)$ and the knot exterior $S^3 \setminus N(J)$ have the same first homology as $\Z[t^{\pm1}]$-modules, so $\Delta_K(t) = \Delta_{J}(t)$.  Gabai \cite{gabai-foliations3} proved that it also determines the Seifert genus $g(J)$, so $g(J) = g(K) = 1$.

We now study the Heegaard Floer homology of various surgeries on $K$, which for the remainder of this proof we will always take with coefficients in a fixed field $\F$.  We recall that there is a smooth concordance invariant $V_0(K) \in \Z$, defined by Rasmussen \cite{rasmussen-thesis}, which can be extracted from the knot Floer complex $\cfkinfty(K)$.  Its precise definition does not matter here, except to note that it appears in computing the Heegaard Floer correction terms of surgeries on $K$, by a formula of Ni and Wu \cite[Proposition~1.6]{ni-wu} which implies
\begin{equation} \label{eq:d-1-surgery}
d(S^3_1(K)) = -2V_0(K)
\end{equation}
as a special case.

The correction terms of the zero-surgery on $K$ satisfy
\begin{equation*}
\begin{aligned}
d_{1/2}(S^3_0(K)) &= \hphantom{-}\tfrac{1}{2} - 2V_0(K) \\
d_{-1/2}(S^3_0(K)) &= -\tfrac{1}{2} + 2V_0(\mirror{K}),
\end{aligned}
\end{equation*}
by \cite[Proposition~4.12]{osz-absolutely} and \eqref{eq:d-1-surgery}.  The same is true for $J$, and these correction terms for $S^3_0(K)$ and $S^3_0(J)$ must agree since $S^3_0(K) \cong S^3_0(J)$, so we have
\begin{equation} \label{eq:v0-k-kp}
V_0(K) = V_0(J).
\end{equation}

Now since $g(K) = 1$ we can apply \cite[Lemma~2.8]{bs-characterizing} to see that $\hfred(S^3_1(K))$ is an $\F[U]$-module with trivial $U$-action, and that
\[ \dim \hfred(S^3_1(K)) = \dim \hfkhat(K,1) - V_0(K). \]
This means that
\[ \hfp(S^3_1(K)) \cong \frac{\F[U,U^{-1}]}{U\cdot \F[U]} \oplus \F^{\dim \hfkhat(K,1) - V_0(K)} \]
as ungraded $\F[U]$-modules, so from the exact triangle
\[ \cdots \to \hfhat(S^3_1(K)) \to \hfp(S^3_1(K)) \xrightarrow{U} \hfp(S^3_1(K)) \to \cdots \]
we deduce that
\[ \dim \hfhat(S^3_1(K)) = 2\left(\dim \hfkhat(K,1) - V_0(K)\right) + 1. \]
Now we apply the surgery exact triangle 
\[ \cdots \to \hfhat(S^3) \to \hfhat(S^3_0(K)) \to \hfhat(S^3_1(K)) \to \cdots \]
to see that
\begin{equation} \label{eq:hfhat-k-0}
\dim \hfhat(S^3_0(K)) = 2\left(\dim \hfkhat(K,1) - V_0(K)\right) + 1 \pm 1.
\end{equation}
The same is true for $J$ since $g(J)=1$ as well, namely
\begin{equation} \label{eq:hfhat-kp-0}
\dim \hfhat(S^3_0(J)) = 2\left(\dim \hfkhat(J,1) - V_0(J)\right) + 1 \pm 1.
\end{equation}
But $\hfhat(S^3_0(K)) \cong \hfhat(S^3_0(J))$ since the two manifolds are the same, so we combine \eqref{eq:hfhat-k-0} and \eqref{eq:hfhat-kp-0} together with \eqref{eq:v0-k-kp} to get
\begin{equation} \label{eq:hfkhat-k-minus-kp}
2\left(\dim \hfkhat(K,1) - \dim \hfkhat(J,1)\right) \in \{-2,0,2\}.
\end{equation}

Now we recall that $\hfkhat(K)$ carries a $\Z$-valued Maslov grading, and that each $\hfkhat(K,i)$ has Euler characteristic equal to the $t^i$-coefficient of $\Delta_K(t)$.  Since $\Delta_K(t) = \Delta_{J}(t)$, this means that
\[ \chi(\hfkhat(K,1)) = \chi(\hfkhat(J,1)), \]
and in particular this implies that
\[ \dim \hfkhat(K,1) \equiv \dim \hfkhat(J,1) \pmod{2}. \]
But then the left side of \eqref{eq:hfkhat-k-minus-kp} is a multiple of $4$, so it must be zero, and thus $\dim\hfkhat(K,1)=\dim\hfkhat(J,1)$ as claimed.
\end{proof}

\begin{remark}The analogue of the $\hfkhat$ claim in Proposition~\ref{prop:genus-1-general} for $g\geq 2$ is that if $S^3_0(K) \cong S^3_0(J)$ then $\hfkhat(K,g) \cong \hfkhat(J,g)$. This has long been known  because in that case \cite[Corollary~4.5]{osz-knot} identifies $\hfkhat(K,g)$ with $\hfp(S^3_0(K),\spinc_{g-1})$ for a certain $\spc$ structure $\spinc_{g-1}$.
\end{remark}

\section{The determinant-7 case} \label{sec:det-7}

Proposition~\ref{prop:genus-1-general} allows us to take care of the knots in Theorem~\ref{thm:nf} with Alexander polynomial $2t-3+2t^{-1}$, using only classical invariants from now on.

\begin{theorem} \label{thm:det-7}
Let $K$ be one of $15n_{43522}$, $\Wh^-(T_{2,3},2)$, or their mirrors.  If $S^3_0(K) \cong S^3_0(J)$ for some knot $J$, then $J$ is isotopic to $K$.
\end{theorem}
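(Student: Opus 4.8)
The plan is to leverage Proposition~\ref{prop:genus-1-general} together with the classification in Theorem~\ref{thm:nf}. Suppose $S^3_0(K) \cong S^3_0(J)$ for some knot $J$. By Proposition~\ref{prop:genus-1-general}, $J$ has genus $1$, the same Alexander polynomial as $K$ — namely $\Delta_J(t) = 2t - 3 + 2t^{-1}$ — and $\dim_\F \hfkhat(J,1) = \dim_\F \hfkhat(K,1) = 2$ over every field $\F$. In particular $J$ is a genus-$1$ nearly fibered knot, so Theorem~\ref{thm:nf} applies: up to mirroring, $J$ is one of $5_2$, $15n_{43522}$, or $\Wh^-(T_{2,3},2)$, since these are exactly the genus-$1$ nearly fibered knots with Alexander polynomial $2t-3+2t^{-1}$ (the knots in \eqref{eq:nf-minus2t} have the other Alexander polynomial and are thereby excluded). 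So the problem reduces to the finite task of distinguishing the $0$-surgeries on $5_2$, $15n_{43522}$, $\Wh^-(T_{2,3},2)$, and their mirrors from one another.

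First I would dispose of $5_2$ and its mirror. These are ruled out against $15n_{43522}$ and $\Wh^-(T_{2,3},2)$ by an invariant that $S^3_0$ determines but that separates the two Alexander-polynomial-matched families within \eqref{eq:nf-plus2t}; the natural candidate is the Casson--Walker invariant or, equivalently here, some correction-term or $d$-invariant data of surgeries, or else the Heegaard Floer homology $\hfhat(S^3_0(\cdot))$ itself (its total rank, via \eqref{eq:hfhat-k-0}, together with its graded or $\spc$-refined structure). Since $5_2$ is alternating while $15n_{43522}$ and $\Wh^-(T_{2,3},2)$ are not, one expects their knot Floer complexes — hence the Floer homology of their $0$-surgeries — to differ; I would extract a concrete numerical invariant (e.g., $\dim \hfhat(S^3_0(\cdot))$ in each $\spc$ structure, or the $d_{\pm 1/2}$ invariants combined with $\hfred$) and check by the classification data that $5_2$ is separated. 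This step is essentially a bookkeeping computation using invariants already catalogued in \cite{bs-nonfibered}.

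The remaining and genuinely harder task is to distinguish $S^3_0(15n_{43522})$, $S^3_0(\Wh^-(T_{2,3},2))$, and their four mirrors from each other. Floer-theoretic invariants alone may not suffice — indeed the two knots share an Alexander polynomial and the same $\hfkhat$ dimensions — so here I would pass to the geometry of the $0$-surgered manifolds: analyze their JSJ/geometric decompositions. The knot $\Wh^-(T_{2,3},2)$ is a satellite with companion $T_{2,3}$, so $S^3_0(\Wh^-(T_{2,3},2))$ contains an essential torus coming from the satellite structure, whereas $15n_{43522}$ is (one expects) hyperbolic, so $S^3_0(15n_{43522})$ has no such torus — or if it is toroidal, its JSJ pieces are different. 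This geometric invariant cleanly separates $15n_{43522}$ from $\Wh^-(T_{2,3},2)$ regardless of orientation. To separate each from its own mirror, I would use an orientation-sensitive invariant of $S^3_0$: the Casson--Walker invariant, the $d$-invariants $d_{\pm 1/2}$ (which swap under mirroring, see the formulas in the proof of Proposition~\ref{prop:genus-1-general} involving $V_0(K)$ versus $V_0(\mirror K)$), or chirality of the hyperbolic piece. The main obstacle will be the $\Wh^-(T_{2,3},2)$ case: I must carefully identify the JSJ decomposition of its $0$-surgery — the satellite torus survives $0$-surgery because the surgery slope is the Seifert-framing, so the surgery is performed inside the satellite solid torus and the decomposition is into the trefoil exterior and a once-punctured-torus-bundle-like piece — and then verify that neither this manifold nor its mirror is homeomorphic to $S^3_0$ of any other knot on the list; handling the orientation-reversal within the satellite piece is the delicate point, and I would resort to Casson--Walker or Floer correction terms to finish it.
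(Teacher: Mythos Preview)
Your reduction step---applying Proposition~\ref{prop:genus-1-general} and Theorem~\ref{thm:nf} to force $J$ into the finite list $\{5_2, 15n_{43522}, \Wh^-(T_{2,3},2)\}$ up to mirroring---is exactly what the paper does. The three distinguishing steps, however, diverge from the paper, and one of them contains a real gap.

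\textbf{Eliminating $5_2$.} You overlooked the simplest move: Theorem~\ref{thm:5_2} already says $0$ is characterizing for $5_2$, so $J$ cannot be $5_2$ or $\mirror{5_2}$ unless $K$ is. The paper just cites this. Your proposed Floer-theoretic separation is unnecessary, and as stated it is only a hope (``one expects their knot Floer complexes \dots\ to differ'') rather than an argument.

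\textbf{Eliminating the mirror.} This is where your proposal has a genuine gap. The invariants you suggest do not do the job. If $S^3_0(K)\cong S^3_0(\mirror K)\cong -S^3_0(K)$, then matching $d_{\pm 1/2}$ across the orientation reversal only yields $V_0(K)=V_0(\mirror K)$, which is not a contradiction without further input; and the Casson--Walker/Lescop invariant of $S^3_0(K)$ is determined by $\Delta_K$, which is mirror-symmetric, so it vanishes as an obstruction here. The paper instead uses the Casson--Gordon invariant $\sigma_1(S^3_0(K),\chi)=-\sigma(K)$: an orientation-preserving homeomorphism $S^3_0(K)\cong -S^3_0(K)$ would force $\sigma(K)=0$, but $\Delta_K(t)=2t-3+2t^{-1}$ has a conjugate pair of simple roots on the unit circle, so $\sigma(K)=\pm 2$. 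This signature argument is the missing idea.

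\textbf{Separating $15n_{43522}$ from $\Wh^-(T_{2,3},2)$.} The paper does not analyze JSJ decompositions here; it simply checks in SnapPy that $\pi_1(S^3_0(15n_{43522}))$ has $3$ index-$6$ subgroups while $\pi_1(S^3_0(\Wh^-(T_{2,3},2)))$ has $21$, so the manifolds (in either orientation) are not homeomorphic. Your JSJ approach is plausible in spirit---and the paper does carry out exactly that kind of analysis for $\Wh^+(T_{2,3},2)$ in Section~\ref{sec:det-9-1}---but you would need to actually determine the JSJ decomposition of $S^3_0(15n_{43522})$, which you have not done.
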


\begin{proof}
In each case we have $\Delta_K(t) = 2t - 3 + 2t^{-1}$ and $\dim_\Q \hfkhat(K,1) = 2$.  Thus Proposition~\ref{prop:genus-1-general} says that the same is true of $J$, and then by Theorem~\ref{thm:nf} we know that $J$ must be one of the knots listed in \eqref{eq:nf-plus2t} up to mirroring.  In fact, it cannot be $5_2$ or its mirror, because we know from Theorem~\ref{thm:5_2} that $0$ is a characterizing slope for each of these.

Next, we claim that $J$ cannot be isotopic to the mirror $\mirror{K}$.  Indeed, if this is the case then
\[ S^3_0(K) \cong S^3_0(\mirror{K}) \cong -S^3_0(K), \]
so if $\chi: H_1(S^3_0(K)) \cong \Z \to \Z/2\Z$ is the unique surjection then the Casson--Gordon invariant $\sigma_1(S^3_0(K),\chi)$ (see \cite{casson-gordon}) must be zero.  This invariant is equal to minus the signature of $K$ \cite[Lemma~3.1]{casson-gordon}, so it follows that $\sigma(K) = 0$.  However, this is impossible because $\Delta_K(t)$ has a conjugate pair of simple roots on the unit circle, at
\[ t =  \tfrac{1}{4}(3 \pm i\sqrt{7}), \]
and these are its only roots. Thus the Tristram--Levine signature $\sigma_K(-1) = \sigma(K)$ must be $\pm 2$, giving a contradiction.

It now remains to be shown that if $K$ is $15n_{43522}$ or its mirror, then $J$ cannot be $\Wh^-(T_{2,3},2)$ or its mirror, and vice versa.  In other words, we need to show that
\[ \pm S^3_0(15n_{43522}) \not\cong \pm S^3_0(\Wh^-(T_{2,3},2)), \]
and we do this by checking that they have different fundamental groups.  This can be done in SnapPy \cite{snappy} by counting $6$-fold covers of each:
\begin{verbatim}
In[1]: M = Manifold('15n43522(0,1)')
In[2]: N = Manifold('16n696530(0,1)')
In[3]: len(M.covers(6))
Out[3]: 3
In[4]: len(N.covers(6))
Out[4]: 21
\end{verbatim}
In particular, the fundamental groups of each have different numbers of index-$6$ subgroups, so they cannot be homeomorphic.
\end{proof}

\begin{remark}
Even with Proposition~\ref{prop:genus-1-general}, we will need more than just classical invariants to address the knots in Theorem~\ref{thm:nf} with Alexander polynomial $-2t+5-2t^{-1}$.  For example, if $P$ is one of the pretzel knots $P(-3,3,2n+1)$, then $P$ is slice and so $\sigma(P) = 0$, meaning that the arguments used in Theorem~\ref{thm:det-7} cannot even distinguish the $0$-surgery on $P$ from the $0$-surgery on its mirror.
\end{remark}

\section{The determinant-9 case, part 1} \label{sec:det-9-1}

We now turn to the knots in Theorem~\ref{thm:nf} with Alexander polynomial $-2t+5-2t^{-1}$.  In order to do this, we will first discuss the JSJ decompositions of their $0$-surgeries.

\begin{lemma} \label{lem:p0-jsj}
Let $Y$ be the result of $0$-surgery on $P(-3,3,2n+1)$ for some $n\in\Z$.  Then $Y$ is a graph manifold: it has a single, non-separating JSJ torus, whose complement is Seifert fibered over the annulus.
\end{lemma}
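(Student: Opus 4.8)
The plan is to exhibit the graph manifold structure of $Y$ by hand, exploiting the fact that the knots $P(-3,3,2n+1)$ form a twist family.  Let $c\subset S^3$ be an unknot bounding a disk $D$ that meets $P(-3,3,1)$ transversally in exactly two points, namely the two strands of the third pretzel tangle.  Adding $n$ full twists along $D$ turns $P(-3,3,1)$ into $P(-3,3,2n+1)$, so $P(-3,3,2n+1)$ is the image of $P(-3,3,1)$ after $(-1/n)$-surgery on $c$, and therefore $Y$ is a Dehn filling of the two-cusped manifold $X=S^3\ssm\nu\big(P(-3,3,1)\cup c\big)$: fill the knot cusp along the slope $0$ (always) and the $c$-cusp along the slope $-1/n$.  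Since $c$ is unknotted, $X$ is at the same time the complement of the knot $P(-3,3,1)$ sitting inside the solid torus $S^3\ssm\nu(c)$ with winding number $0$ and wrapping number $2$.

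First I would identify $X$, or equivalently the once-filled manifold $Z$ obtained by filling the knot cusp of $X$ along slope $0$, by Kirby calculus on the corresponding surgery diagram: handle slides, blow-ups/downs, and Rolfsen twists should present $Z$ as a graph manifold built from two Seifert fibered pieces glued along a single torus, with exceptional fibers of multiplicity $3$ coming from the $-3$ and the $3$ twist regions.  The essential bookkeeping is to see that the $c$-cusp lies entirely inside one of these two pieces, on whose boundary torus the family of twisting slopes $-1/n$ never equals the fiber slope.  Filling the $c$-cusp along $-1/n$ then caps that piece off, and what remains of $Z$ is a Seifert fibered space $N$ over an annulus carrying the two order-$3$ cone points; the manifold $Y$ is obtained by gluing the two boundary tori of $N$ together along some homeomorphism $\phi$, and the gluing torus $T$ is precisely the capped-off genus-one Seifert surface $\widehat F$ of $P(-3,3,2n+1)$.

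It remains to verify that $T$ is the single, non-separating JSJ torus.  It is non-separating because $[\widehat F]$ generates $H_2(Y;\Z)\cong\Z$, and it is incompressible because $Y=S^3_0(P(-3,3,2n+1))$ is irreducible (being $0$-surgery on a nontrivial knot) and a compressible torus in an irreducible manifold separates and is null-homologous.  Since $N$ is Seifert fibered, once we know $Y$ is not itself Seifert fibered it follows formally that $\{T\}$ is the entire JSJ system --- any Seifert fibered submanifold is a single piece of the JSJ decomposition, so the vertical tori that $N$ may contain are irrelevant.  That $Y$ is not Seifert fibered will follow from checking that $\phi$ is not fiber-preserving (alternatively, from the fact that $P(-3,3,2n+1)$ is not a torus knot, so its $0$-surgery is not Seifert fibered).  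I expect the main obstacle to be precisely the Kirby-calculus step: pinning down the two Seifert pieces with their correct Seifert invariants and the position of the twisting slopes on the relevant boundary torus, and then confirming, uniformly in $n$, that the resulting self-gluing $\phi$ is non-fiber-preserving.  A cleaner route that sidesteps surgery diagrams would be to take $\widehat F$ itself as the candidate JSJ torus from the outset and to recognize $Y\ssm\widehat F$ as the sutured complement of the once-punctured-torus Seifert surface $F$ of $P(-3,3,2n+1)$; since $F$ is a plumbing of two twisted bands, its sutured complement is visibly a union of two solid tori, hence Seifert fibered over an annulus with two order-$3$ cone points, and one is left only with ruling out compressibility of $\widehat F$ and the Seifert fibered case for $Y$, exactly as above.
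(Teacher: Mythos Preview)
Your ``cleaner route'' at the end is exactly the paper's strategy: take the capped Seifert surface $\hat F$ as the candidate torus, show that $Y\ssm\hat F$ is Seifert fibered, and rule out $Y$ itself being Seifert fibered.  The paper does neither of the last two steps by hand.  For the identification of $Y\ssm\hat F$ it quotes Cantwell--Conlon, who prove that cutting $Y$ along the capped Seifert surface yields the complement of the $(2,4)$-torus link in $S^3$; this is Seifert fibered over the annulus with a single cone point of order~$2$.  To rule out a Seifert fibration on $Y$ the paper invokes Ichihara--Jong, who showed that a toroidal surgery on a non-trefoil Montesinos knot is never Seifert fibered.

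Your hands-on identification of $Y\ssm\hat F$ contains a real error.  The minimal Seifert surface $F$ for $P(-3,3,2n+1)$ is \emph{not} a plumbing of two twisted bands --- the standard pretzel surface is two disks joined by \emph{three} twisted bands, carrying $-3$, $3$, and $2n+1$ half-twists --- so your reason for ``visibly a union of two solid tori'' does not apply, and in any case the Seifert invariants you assert (two order-$3$ cone points) are wrong.  The base orbifold you describe has orbifold Euler characteristic $-\tfrac{4}{3}$, whereas the correct one (annulus with one order-$2$ cone point, coming from the $T_{2,4}$ complement) has orbifold Euler characteristic $-\tfrac{1}{2}$; these are genuinely different manifolds, not alternative presentations of the same one.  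Your parenthetical ``not a torus knot, hence $0$-surgery not Seifert fibered'' happens to be true here but is not a one-liner, and the paper's appeal to Ichihara--Jong sidesteps having to justify it.  The Kirby-calculus approach you open with is unnecessary.
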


\begin{proof}
We know that $Y$ is toroidal, because if $\Sigma$ is a genus-1 Seifert surface for $P=P(-3,3,2n+1)$ then it extends to a non-separating torus $\hat\Sigma$ after performing $0$-surgery on $P$, and $\hat\Sigma$ is incompressible by \cite[Corollary~8.2]{gabai-foliations3}.  Since $P$ is a Montesinos knot other than a trefoil, Ichihara and Jong \cite{ichihara-jong} proved that $S^3_0(P)$ cannot be toroidal and Seifert fibered, so $Y$ is not Seifert fibered.  On the other hand, if we cut $Y$ open along the torus $\hat\Sigma$ then Cantwell and Conlon \cite[Theorem~1.5]{cantwell-conlon-52} proved that the resulting manifold is the complement of the $(2,4)$-torus link $T_{2,4} \subset S^3$, which is Seifert fibered over the annulus.
\end{proof}

\begin{lemma} \label{lem:w0-jsj}
Let $Y$ be the result of $0$-surgery on $\Wh^+(T_{2,3},2)$.  Then $Y$ is a graph manifold, and its JSJ decomposition consists of two pieces: one piece is the exterior of $T_{2,3}$, and the other is Seifert fibered over a pair of pants.
\end{lemma}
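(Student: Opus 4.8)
The plan is to exploit the fact that $W := \Wh^+(T_{2,3},2)$ is a satellite knot. Write it as the satellite with companion the right-handed trefoil and pattern $P$ the positive-clasp, two-twisted Whitehead pattern in a standard solid torus $V$, and identify $V$ with a tubular neighborhood $\nu(T_{2,3})$; this exhibits $W$ in $S^3$ together with a companion torus $T := \partial\nu(T_{2,3})$ in $S^3\setminus\nu(W)$, splitting the exterior as $S^3\setminus\nu(W) = E(T_{2,3}) \cup_T M_P$, where $M_P := V\setminus\nu(P)$ is the exterior of the pattern in $V$ (it is the Whitehead link exterior, with the two boundary tori $T$ and $\partial\nu(W)$). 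Since $P$ has winding number $0$, a minimal genus Seifert surface $F$ for $W$ — a once-punctured torus — may be isotoped into $\inr V\setminus\nu(P) = M_P$, hence off $T$; after $0$-surgery it caps off to a torus $\hat F\subset Y$ disjoint from $T$, and $\hat F$ is incompressible in $Y$ because $F$ realizes the genus. The $0$-surgery is performed on $\partial\nu(W)\subset M_P$, so $Y = E(T_{2,3})\cup_T N$ where $N$ is obtained from $M_P$ by Dehn filling $\partial\nu(W)$ along the $0$-framed longitude of $W$; note $\hat F\subset N$, and $\hat F$ is non-separating in $N$ (a separating $\hat F$ would separate $Y$, contradicting $0\ne[\hat F]\in H_2(Y)\cong\Z$).

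The crux is to identify $N$, which I would do by cutting it open along $\hat F$. Because $P$ has winding number $0$, the $0$-framed longitude of $W$ is isotopic in $M_P$ to the preferred longitude of $P$ in $V$ (a Seifert surface for $P$ inside $V$ is also one for $W$), so $N$ is the filling of $M_P$ along the pattern longitude, and $N\,|\,\hat F$ is obtained from $M_P$ cut along the relative Seifert surface $F_0\subset M_P$ of $P$ by attaching a $2$-handle along the annulus $\partial\nu(W)\setminus\nu(\partial F_0)$. I would then analyze the twisted clasp pattern directly — its relative Seifert surface is an annulus running twice along the core of $V$ (with zero net winding) plumbed to a clasp band — and show that cutting $V$ along this surface and attaching the meridional $2$-handle produces a Seifert fibration over the pair of pants, whose three boundary tori are the two copies $\hat F_\pm$ of $\hat F$ and the copy of $T$. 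Equivalently, one can draw a surgery diagram for $Y$: $W$ is obtained from the untwisted double $\Wh^+(T_{2,3},0)$ by $\pm\tfrac12$-surgery on an unknot $C$ encircling the two strands of the clasp, so $S^3_0(W)$ becomes surgery on a small link which simplifies via Rolfsen twists and slam-dunks to display $E(T_{2,3})$ glued to the Seifert-fibered piece; this is the analogue, with companion $T_{2,3}$ inserted, of the Cantwell--Conlon computation \cite{cantwell-conlon-52} that the corresponding cut-open manifold for the pretzels is $E(T_{2,4})$. Either way one concludes that cutting $Y$ along $T\cup\hat F$ yields $E(T_{2,3})\sqcup Q$ with $Q$ Seifert fibered over the pair of pants, so $Y$ is a graph manifold.

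It remains to see that $T$ and $\hat F$ are exactly the JSJ tori. The torus $T$ is incompressible since a nontrivial torus knot exterior is irreducible with incompressible boundary (and one checks it remains incompressible from the $N$ side), and $Y$ is irreducible because $W$ is nontrivial of genus $1$ (Gabai), so $\hat F$ too is essential; being incompressible and non-boundary-parallel in the irreducible closed manifold $Y$, and since a closed incompressible torus in $E(T_{2,3})$ or in $Q$ (a Seifert piece over a surface with boundary carrying no essential non-peripheral curves) must be boundary-parallel, $\hat F$ is isotopic to a JSJ torus. To see that $T$ and $\hat F$ cannot be amalgamated into adjacent Seifert pieces, I would compare fiber slopes: $E(T_{2,3})$ is Seifert fibered over the disk with two exceptional fibers, of orders $2$ and $3$, its regular fiber on $T$ having slope $6$, whereas the regular fiber of $Q$ meets the copy of $T$ in a different slope and the self-identification of $Q$ along $\hat F_+\sim\hat F_-$ is not fiber-preserving, both facts read off from the explicit model for $Q$ found above. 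Hence the JSJ decomposition of $Y$ consists of the two tori $T,\hat F$ and the two pieces $E(T_{2,3})$ and $Q$, as claimed.

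The step I expect to be the main obstacle is this explicit identification of the cut-open pattern exterior $N\,|\,\hat F$ (equivalently $M_P$ cut along the Seifert surface of the twisted clasp pattern with the meridional $2$-handle attached) as a Seifert fibration over the pair of pants, together with the accompanying slope bookkeeping on $T$ and on $\hat F_\pm$ needed to certify that $T$ and $\hat F$ are genuinely essential rather than fibration-compatible.
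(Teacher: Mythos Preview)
Your strategy---cut $Y$ along the companion torus $T$ and the capped Seifert surface $\hat F$, identify the two pieces as Seifert fibered, then verify minimality---is exactly the paper's.  The difference is in the tools: where you propose direct computations and flag them as obstacles, the paper invokes three existing results that dispatch them.  For incompressibility of $T$ on the $N$-side, where you write ``one checks,'' the paper cites Gabai's theorem on surgery in solid tori \cite[Theorem~1.1]{gabai-solid-tori}: since $P$ has winding number zero it is not a $0$- or $1$-bridge braid, so $V_0(P)$ has incompressible boundary automatically.  For the identification of $N\,|\,\hat F$---your declared main obstacle---the paper quotes \cite[Theorem~7.1]{bs-nonfibered}, which says that cutting $S^3_0(W)$ along $\hat F$ yields the complement of the $(2,4)$-cable of $T_{2,3}$; since $T$ is the cabling companion torus in that complement, $N\,|\,\hat F$ is the $(2,4)$-torus link complement in the solid torus, visibly Seifert fibered over a pair of pants.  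This sidesteps your Kirby calculus entirely.  For minimality at $\hat F$, rather than matching fiber slopes on $\hat F_\pm$, the paper shows $N=V_0(P)$ is not Seifert fibered by a clean trick: its Dehn fillings are $(0,\tfrac{1}{n})$-surgeries on the Whitehead link, i.e.\ $0$-surgeries on twist knots, and by Ichihara--Jong \cite{ichihara-jong} only the trefoil among these has a toroidal Seifert fibered surgery.  Your slope-matching approach would also work and would make the $T$-side explicit (the paper leaves that step implicit), but it requires the concrete model of $Q$ you have not actually produced.
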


\begin{proof}
Let $W = \Wh^+(T_{2,3},2)$.  We observe that $W$ is a satellite, with companion $C = T_{2,3}$; its pattern $P$ has winding number $0$, hence is not a $0$- or $1$-bridge braid in the solid torus $V = S^1\times D^2$.  This means that $0$-surgery on the pattern $P \subset V$ produces a manifold with incompressible torus boundary, by \cite[Theorem~1.1]{gabai-solid-tori}.  Thus the companion torus $T = \partial N(C)$ in the exterior of $W$ remains incompressible in $Y=S^3_0(W)$.  In particular $T$ is one of the JSJ tori of $S^3_0(W)$, and moreover it separates $S^3_0(W)$ into the union of $S^3 \setminus N(T_{2,3})$ (which is Seifert fibered) and $V_0(P)$.

We claim that $V_0(P)$ is not Seifert fibered.  Indeed, if it were then all but at most one Dehn filling of its boundary would also be Seifert fibered.  But for any $n$ we can realize one of these Dehn fillings by doing $(0,\frac{1}{n}$)-surgery on the Whitehead link, and these are homeomorphic to $0$-surgeries on infinitely many different twist knots.  The only twist knots with a toroidal, Seifert fibered surgery are the trefoils \cite{ichihara-jong}, however, so $V_0(P)$ cannot be Seifert fibered after all.

On the other hand, that the pattern $P$ has a genus-1 Seifert surface $\Sigma$ which lies entirely inside $V$, and which extends to a non-separating, incompressible torus $\hat\Sigma$ in $V_0(P) \subset S^3_0(W)$.  According to \cite[Theorem~7.1]{bs-nonfibered}, if we cut $S^3_0(W)$ open along $\hat\Sigma$ then we are left with the complement of the $(2,4)$-cable of $T_{2,3}$, where the companion torus is the same torus $T$ discussed above.  It follows that cutting $V_0(P)$ along $\hat\Sigma$ produces the complement of a $(2,4)$-torus link in the solid torus, and this is Seifert fibered over a pair of pants.  We conclude that $T$ and $\hat\Sigma$ are the JSJ tori of $S^3_0(W)$, and that $S^3_0(W)$ has the claimed JSJ decomposition.
\end{proof}

Lemmas~\ref{lem:p0-jsj} and \ref{lem:w0-jsj} make it easy to distinguish $0$-surgery on $\Wh^+(T_{2,3},2)$ from the $0$-surgeries on the $P(-3,3,2n+1)$ pretzel knots.

\begin{theorem} \label{thm:wh}
Let $K$ be either $\Wh^+(T_{2,3},2)$ or its mirror.  If $S^3_0(J) \cong S^3_0(K)$ for some knot $J \subset S^3$, then $J$ is isotopic to $K$.
\end{theorem}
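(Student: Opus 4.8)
Write $W=\Wh^+(T_{2,3},2)$; whether $K=W$ or $K=\mirror{W}$ we have $\Delta_K(t)=-2t+5-2t^{-1}$ and $\dim_\Q\hfkhat(K,1)=2$. The plan is to feed this into our structural results and then dispose of the one surviving possibility, $J=\mirror{K}$, by a chirality argument. First I would apply Proposition~\ref{prop:genus-1-general}: any $J$ with $S^3_0(J)\cong S^3_0(K)$ has genus one, Alexander polynomial $-2t+5-2t^{-1}$, and $\dim_\Q\hfkhat(J,1)=2$, so by Theorem~\ref{thm:nf} the knot $J$ is, up to mirroring, either $W$ itself or one of the pretzels $P(-3,3,2n+1)$.

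Next I would eliminate the pretzels and their mirrors using the JSJ computations of Section~\ref{sec:det-9-1}. The number of tori in the JSJ decomposition is a homeomorphism invariant independent of orientation, and Lemma~\ref{lem:w0-jsj} gives $\pm S^3_0(W)$ exactly two JSJ tori while Lemma~\ref{lem:p0-jsj} gives each $\pm S^3_0(P(-3,3,2n+1))$ exactly one. Hence $J\in\{W,\mirror{W}\}$. If $J=\mirror{K}$, then since $S^3_0(\mirror{K})=-S^3_0(K)$ the hypothesized orientation-preserving homeomorphism $S^3_0(J)\cong S^3_0(K)$ would make $S^3_0(K)$, and therefore $S^3_0(W)$, admit an orientation-reversing self-homeomorphism. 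So the theorem reduces to showing that no such self-homeomorphism exists.

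To do this, suppose $h\colon S^3_0(W)\to S^3_0(W)$ reverses orientation. Since the JSJ tori are unique up to isotopy, after an isotopy $h$ carries the JSJ collection of Lemma~\ref{lem:w0-jsj} to itself and permutes its two pieces. The exterior $S^3\setminus N(T_{2,3})$ is the only JSJ piece Seifert fibered over a disk (the other is fibered over a pair of pants), so $h$ must fix it, and restricts there to an orientation-reversing self-homeomorphism of the right-handed trefoil exterior. Because the meridian is the only slope along which surgery on $T_{2,3}$ yields $S^3$ (Gordon--Luecke), such a homeomorphism would extend over the surgery solid torus to an orientation-reversing homeomorphism of $S^3$ carrying $T_{2,3}$ to itself or its reverse --- contradicting the chirality of the trefoil (whose signature is nonzero). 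Hence $S^3_0(W)\not\cong -S^3_0(W)$, and so $J=K$.

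I expect the last step --- chirality of $S^3_0(W)$ --- to be the main obstacle, the delicate parts being to arrange that $h$ respects the JSJ decomposition and to see that the trefoil exterior is the only piece that could absorb an orientation reversal. It is worth noting that the Casson--Gordon signature trick used in Theorem~\ref{thm:det-7} is unavailable here: the roots of $\Delta_K$ are $t=2$ and $t=\tfrac12$, which lie off the unit circle, so every Tristram--Levine signature of $K$ vanishes. (Alternatively, one could try to obstruct $S^3_0(W)\cong -S^3_0(W)$ via the correction terms $d_{\pm1/2}(S^3_0(W))$, which by the proof of Proposition~\ref{prop:genus-1-general} equal $\tfrac12-2V_0(W)$ and $-\tfrac12+2V_0(\mirror{W})$; since $d_{\pm1/2}(-Y)=-d_{\mp1/2}(Y)$, a homeomorphism $S^3_0(W)\cong-S^3_0(W)$ would force $V_0(W)=V_0(\mirror{W})$, so this route works only when those two integers differ.)
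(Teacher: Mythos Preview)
Your proposal is correct and follows essentially the same route as the paper: reduce via Proposition~\ref{prop:genus-1-general} and Theorem~\ref{thm:nf}, rule out the pretzels by comparing JSJ decompositions (Lemmas~\ref{lem:p0-jsj} and~\ref{lem:w0-jsj}), and rule out $J=\mirror{K}$ by observing that an orientation-reversing self-homeomorphism of $S^3_0(W)$ would restrict to one of the trefoil exterior, contradicting chirality of $T_{2,3}$. The paper phrases the last step as an orientation-preserving homeomorphism $S^3\setminus N(T_{2,3})\cong S^3\setminus N(T_{-2,3})$ and distinguishes the two JSJ pieces by noting only one is a knot complement, but this is equivalent to your version; your added remarks on why the Casson--Gordon and $d_{\pm 1/2}$ approaches fail here are accurate and a nice complement.
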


\begin{proof}
By Proposition~\ref{prop:genus-1-general}, we see that $J$ has genus 1 and top knot Floer homology
\[ \hfkhat(J,1;\Q) \cong \hfkhat(K,1;\Q) \cong \Q^2, \]
and its Alexander polynomial is $-2t+5-2t^{-1}$.  According to Theorem~\ref{thm:nf}, we therefore know that $J$ is either $K$, its mirror $\mirror{K}$, or some pretzel knot $P(-3,3,2n+1)$.  (We note here that the mirror of $P(-3,3,2n+1)$ is $P(-3,3,-2n-1)$.)

In order to show that $J$ cannot be $\mirror{K}$, we consider the JSJ decompositions of
\[ S^3_0(K) \quad\text{and}\quad S^3_0(\mirror{K}) \cong -S^3_0(K). \]
One of these two manifolds is $S^3_0(\Wh^+(T_{2,3},2))$, and by Lemma~\ref{lem:w0-jsj} its JSJ decomposition consists of two pieces, one of which is the exterior of $T_{2,3}$ and the other of which is not a knot complement.  But then the other manifold decomposes into the exterior of $T_{-2,3}$ and another piece, which is again not a knot complement.  By the uniqueness of the JSJ decomposition, any orientation-preserving homeomorphism $S^3_0(K) \xrightarrow{\cong} -S^3_0(K)$ would have to restrict to an orientation-preserving homeomorphism
\[ S^3 \setminus N(T_{2,3}) \cong S^3 \setminus N(T_{-2,3}), \]
and this is impossible.

Now if $J = P(-3,3,2n+1)$ then Lemma~\ref{lem:p0-jsj} says that the JSJ decomposition of $S^3_0(J)$ consists of a single Seifert fibered piece.  This does not match the decomposition of $S^3_0(K)$, so again we must have $S^3_0(K) \not\cong S^3_0(J)$.  We have now shown that $J$ cannot be either $\bar{K}$ or any of the pretzel knots $P(-3,3,2n+1)$, so $J$ must be isotopic to $K$ after all.
\end{proof}

\section{The determinant-9 case, part 2} \label{sec:det-9-2}

In this section we prove that $0$ is a characterizing slope for each pretzel knot $P(-3,3,2n+1)$.  We begin with the following.

\begin{lemma} \label{lem:these-pretzels-are-making-me-thirsty}
If $S^3_0(J) \cong S^3_0(P(-3,3,2n+1))$ for some $n\in\Z$, then $J$ is isotopic to the pretzel knot $P(-3,3,2m+1)$ for some $m\in\Z$.
\end{lemma}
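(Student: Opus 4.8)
The plan is to assemble results already in place: Proposition~\ref{prop:genus-1-general}, Theorem~\ref{thm:nf}, and the JSJ computations of Lemmas~\ref{lem:p0-jsj} and \ref{lem:w0-jsj}. To start, I would note that $P(-3,3,2n+1)$ has genus one, Alexander polynomial $-2t+5-2t^{-1}$, and $\dim_\Q\hfkhat(P(-3,3,2n+1),1)=2$ --- this is exactly its place in the family \eqref{eq:nf-minus2t} of Theorem~\ref{thm:nf}. Feeding the hypothesis $S^3_0(J)\cong S^3_0(P(-3,3,2n+1))$ into Proposition~\ref{prop:genus-1-general} then gives that $J$ too has genus one, $\Delta_J(t)=-2t+5-2t^{-1}$, and $\dim_\Q\hfkhat(J,1)=2$.

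Next I would apply Theorem~\ref{thm:nf}: up to mirroring, $J$ is one of the knots listed in \eqref{eq:nf-plus2t} or \eqref{eq:nf-minus2t}. Mirroring preserves the Alexander polynomial, and the knots in \eqref{eq:nf-plus2t} all have $\Delta=2t-3+2t^{-1}\ne-2t+5-2t^{-1}$, so $J$ must be, up to mirroring, one of the knots in \eqref{eq:nf-minus2t}. Since $\mirror{P(-3,3,2m+1)}=P(-3,3,-2m-1)$ is again a pretzel of the same form, this says precisely that $J$ is $\Wh^+(T_{2,3},2)$, its mirror, or $P(-3,3,2m+1)$ for some $m\in\Z$; it remains only to exclude the first two possibilities.

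For that I would compare JSJ decompositions directly. If $J$ were $\Wh^+(T_{2,3},2)$ or its mirror, then $S^3_0(J)$ would be $S^3_0(\Wh^+(T_{2,3},2))$ or $-S^3_0(\Wh^+(T_{2,3},2))$, whose JSJ decomposition consists of two pieces by Lemma~\ref{lem:w0-jsj} (and orientation reversal does not change this). But by Lemma~\ref{lem:p0-jsj} the JSJ decomposition of $S^3_0(P(-3,3,2n+1))$ consists of a single piece. Since the JSJ decomposition is a homeomorphism invariant, this contradicts the hypothesis $S^3_0(J)\cong S^3_0(P(-3,3,2n+1))$. (Alternatively one could invoke Theorem~\ref{thm:wh} together with the fact that $P(-3,3,2n+1)$, being a small Montesinos knot, is not a satellite.) Hence $J=P(-3,3,2m+1)$ for some $m\in\Z$, as claimed. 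I do not foresee any real obstacle here: everything reduces to the structural inputs cited above, the only care needed being the bookkeeping with mirrors in the second step and the observation that mirroring leaves the JSJ graph --- in particular its number of pieces --- unchanged.
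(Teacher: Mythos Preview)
Your proposal is correct and follows essentially the same route as the paper: reduce via Proposition~\ref{prop:genus-1-general} and Theorem~\ref{thm:nf} to $J\in\{\Wh^+(T_{2,3},2),\ \mirror{\Wh^+(T_{2,3},2)},\ P(-3,3,2m+1)\}$, then rule out the Whitehead doubles. The only cosmetic difference is that the paper invokes Theorem~\ref{thm:wh} (which already packages the JSJ comparison) rather than citing Lemmas~\ref{lem:p0-jsj} and \ref{lem:w0-jsj} directly---and you mention that alternative yourself.
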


\begin{proof}
Just as in the proof of Theorem~\ref{thm:wh}, we apply Proposition~\ref{prop:genus-1-general} and Theorem~\ref{thm:nf} to see that if we write $W = \Wh^+(T_{2,3},2)$ then $J$ must be one of
\[ W,\ \mirror{W},\ \text{or}\ P(-3,3,2m+1) \ (m\in\Z). \]
On the other hand, Theorem~\ref{thm:wh} tells us that
\[ S^3_0(W) \not\cong S^3_0(P(-3,3,2n+1)) \quad\text{and}\quad S^3_0(\mirror{W}) \not\cong S^3_0(P(-3,3,2n+1)), \]
so $J$ cannot be $W$ or $\mirror{W}$, hence it must be some $P(-3,3,2m+1)$.
\end{proof}

In order to distinguish the $3$-manifolds $S^3_0(P(-3,3,2n+1))$ for different values of $n$, we use Ohtsuki's perturbative invariants of 3-manifolds $M$ with $b_1(M)=1$ \cite{ohtsuki-perturbative}, which take the form of a power series
\[ \tau(M;c) = \sum_{\ell=0}^\infty \lambda_\ell(M;c)(q-1)^\ell \in \C[[q-1]] \]
that can be evaluated at $c=0$ or at any root $c$ of the Alexander polynomial $\Delta_M(t)$.  Each $\lambda_\ell(M;c)$ is itself an invariant of $M$, and $\lambda_0(M;c)$ is determined by the Alexander polynomial of $M$ \cite[Proposition~5.3]{ohtsuki-perturbative}, so we will compute $\lambda_1(S^3_0(P(-3,3,2n+1)),0)$.

According to the discussion in \cite[\S1]{ohtsuki-perturbative}, we have
\[ \lambda_\ell(S^3_0(K); c) = -\frac{1}{2}\cdot \frac{1+c}{1-c} \left( \mathop{\operatorname{Res}}\limits_{t=c} \frac{(1-t^{-1})^2 P_\ell(t)}{\Delta_K(t)^{2\ell+1}} \right), \]
where the Laurent polynomials $P_\ell(t)$ are the coefficients of the loop expansion
\[ J_n(K;q) = \sum_{\ell=0}^\infty \frac{P_\ell(q^n)}{\Delta_K(q^n)^{2\ell+1}} (q-1)^\ell \]
of the colored Jones polynomial.  We have $P_0(t)=1$ regardless of $K$, and then Ohtsuki \cite[Proposition~6.1]{ohtsuki-cabling} computed that
\begin{equation} \label{eq:p1-formula}
P_1(t) = -(t^{1/2}-t^{-1/2})^2 \cdot \hat\Theta_K(t),
\end{equation}
where the last factor
\[ \hat\Theta_K(t) = \frac{\Theta_K(t,1)}{(t^{1/2}-t^{-1/2})^2} \in \Q[t,t^{-1}] \]
is a specialization of a polynomial called the ``2-loop polynomial'' $\Theta_K(t_1,t_2)$ arising from the Kontsevich integral of $K$.  (We note that the polynomial $J_n(K;q)$ in \cite{ohtsuki-perturbative} is the same as the one denoted $V_n(K;q)$ in \cite{ohtsuki-cabling} -- both are normalized to take the value $1$ when $K$ is the unknot -- and also that \eqref{eq:p1-formula} may differ from the value in \cite{ohtsuki-perturbative} by a sign, but this only changes the invariants $\lambda_1(S^3_0(K);c)$ that we will compute by an overall sign.)

The calculation of these polynomials was described in part by Ohtsuki \cite{ohtsuki-loop}, including a computation of both $\Theta_K(t_1,t_2)$ and $\hat\Theta_K(t)$ when $K$ is a 3-stranded pretzel knot:

\begin{lemma}[{\cite[Example~3.6]{ohtsuki-loop}}] \label{lem:ohtsuki-pretzel}
For the pretzel knot $K = P(p,q,r)$, if we let
\[ d = \frac{pq+qr+rp+1}{4} \]
then the reduced 2-loop polynomial of $K$ is given by 
\[ \hat\Theta_K(t) = \tfrac{1}{16}\big((p+q+r)(4d+1)+pqr\big) \left(-2 - \frac{2d+1}{3}(t-2+t^{-1})\right). \]
\end{lemma}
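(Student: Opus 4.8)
Since Lemma~\ref{lem:ohtsuki-pretzel} is quoted from \cite[Example~3.6]{ohtsuki-loop}, the honest proof is a reference to that example; what follows sketches how I would reconstruct the computation. The plan is to combine the structural constraints on Ohtsuki's 2-loop polynomial with a direct calculation from a genus-one presentation. First I would recall the relevant properties of $\Theta_K(t_1,t_2)$: it is a symmetric Laurent-polynomial invariant, invariant under the $S_3$-action permuting $t_1$, $t_2$, $(t_1t_2)^{-1}$ and under $(t_1,t_2)\mapsto(t_1^{-1},t_2^{-1})$, it has a known behaviour under mirroring, and its bidegree is bounded in terms of the Seifert genus. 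For a genus-one knot the genus bound forces $\hat\Theta_K(t)=\Theta_K(t,1)/(t^{1/2}-t^{-1/2})^2$ into the affine shape $\hat\Theta_K(t)=\alpha_K+\beta_K(t-2+t^{-1})$, so the problem reduces to identifying the two rational numbers $\alpha_{P(p,q,r)}$ and $\beta_{P(p,q,r)}$ as functions of $(p,q,r)$.

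Next I would present $K=P(p,q,r)$ by its standard genus-one Seifert surface, the linear plumbing of three bands carrying $p$, $q$, $r$ half-twists, and feed the associated surgery presentation of the knot complement into Ohtsuki's method for extracting $\Theta$ from the rational form of the Kontsevich integral (the Aarhus integral): the 2-loop part is governed by a short list of Jacobi diagrams whose values come from a Gaussian integration over their internal legs, together with the wheeled correction term. Because the Seifert form of this surface is polynomial in $p,q,r$, both $\alpha$ and $\beta$ are polynomials in $p,q,r$; because $P(p,q,r)$ is unchanged under permuting its parameters, they are \emph{symmetric} polynomials, hence polynomials in $e_1=p+q+r$, $e_2=pq+qr+rp$, and $e_3=pqr$; and the degrees of the contributing diagrams bound their total degrees (one expects $\alpha$ of degree $3$ and $\beta$ of degree $5$). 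This leaves only a bounded list of unknown numerical coefficients.

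It then remains to pin those coefficients down. One may either carry the Aarhus-integral computation to the end, or interpolate, evaluating $\hat\Theta_{P(p,q,r)}$ against pretzels whose 2-loop polynomials are already known: the trefoils $P(1,1,1)=T_{2,3}$ and $P(-1,-1,-1)$, the unknot $P(1,1,-1)$, and the twist knots $P(1,1,2k-1)$ (whose $\Theta$ is computed in \cite{ohtsuki-loop}); moreover $\mirror{P(p,q,r)}=P(-p,-q,-r)$ up to reordering parameters, and combining this with the mirror behaviour of $\hat\Theta$ relates the values at $(p,q,r)$ and $(-p,-q,-r)$ and so roughly halves the number of free coefficients. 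Matching against enough cases determines $\alpha$ and $\beta$, and rewriting them in terms of $d=\tfrac14(pq+qr+rp+1)$ yields the stated formula; the overall sign, as the surrounding discussion notes, depends only on a normalization convention and does not affect the invariants computed afterwards.

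The main obstacle is the middle step: computing the 2-loop part of the Aarhus/Kontsevich integral for the plumbed surface — organizing the Jacobi-diagram contributions, performing the Gaussian integration over internal edges, and handling the wheeled correction while keeping all normalizations consistent — is where essentially all of the content lies. Once the symmetric-polynomial ansatz and the degree bounds are in hand, the final identification of $\alpha$ and $\beta$ is routine interpolation.
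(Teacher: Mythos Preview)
The paper gives no proof of this lemma: it is quoted directly from \cite[Example~3.6]{ohtsuki-loop}, as the lemma header already indicates, and the paper simply uses the formula without further justification. Your opening sentence correctly identifies this, so your proposal matches the paper's treatment; the reconstruction sketch that follows is additional material not present in the paper.
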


Applying Lemma~\ref{lem:ohtsuki-pretzel} when $(p,q,r)=(-3,3,2n+1)$, we have $d = -2$ and then
\begin{equation} \label{eq:pretzel-2-loop}
\hat\Theta_{P(-3,3,2n+1)}(t) = -(2n+1)\left(t-4+t^{-1}\right),
\end{equation}
whence for $K = P(-3,3,2n+1)$ we have $\Delta_K(t) = -2t+5-2t^{-1}$ and
\begin{equation} \label{eq:pretzel-p1}
\begin{aligned}
P_1(t) &= -(t-2+t^{-1}) \cdot \hat\Theta_{K}(t) \\
&= (2n+1)(t-2+t^{-1})(t-4+t^{-1}) \\
&= (2n+1)(t^2 - 6t + 10 - 6t^{-1} + t^{-2}) \\
&= (2n+1)\left( \tfrac{1}{4}\Delta_K(t)^2 + \tfrac{1}{2}\Delta_K(t) - \tfrac{3}{4}\right).
\end{aligned}
\end{equation}
The reason for writing it this way is that we can compute $\lambda_1(S^3_0(K),0)$ via the following lemma.

\begin{lemma}[{\cite[Proposition~1.7(2)]{ohtsuki-perturbative}}] \label{lem:residue-shortcut}
Suppose that the Alexander polynomial of $K$ has degree $1$, and write
\begin{align*}
\Delta_K(t) &= b_0 - b_1(t-2+t^{-1}), \\
P_1(t) &= f(t)\Delta_K(t)^3 + a_2 \Delta_K(t)^2 + a_1\Delta_K(t) + a_0
\end{align*}
for some constants $b_0,b_1,a_0,a_1,a_2 \in \Q$ and Laurent polynomial $f(t)$.  Then
\[ \lambda_1(S^3_0(K); 0) = -\frac{d}{2} + \frac{a_2}{2b_1} \]
where $d$ is the constant term of $(t-2+t^{-1})f(t)$.
\end{lemma}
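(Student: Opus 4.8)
The plan is to reduce the statement to a residue computation using the formula for $\lambda_\ell(S^3_0(K);c)$ recorded above, specialized to $\ell=1$ and $c=0$. Since $\frac{1+c}{1-c}=1$ at $c=0$ and $2\ell+1=3$, that formula reads
\[ \lambda_1(S^3_0(K);0) = -\tfrac{1}{2}\mathop{\operatorname{Res}}\limits_{t=0}\frac{(1-t^{-1})^2 P_1(t)}{\Delta_K(t)^3}, \]
so everything comes down to computing the coefficient of $t^{-1}$ in the Laurent expansion of this rational function about $t=0$. Substituting the decomposition $P_1(t) = f(t)\Delta_K(t)^3 + a_2\Delta_K(t)^2 + a_1\Delta_K(t) + a_0$ breaks the integrand into four pieces,
\[ (1-t^{-1})^2 f(t) + \frac{a_2(1-t^{-1})^2}{\Delta_K(t)} + \frac{a_1(1-t^{-1})^2}{\Delta_K(t)^2} + \frac{a_0(1-t^{-1})^2}{\Delta_K(t)^3}, \]
and I would compute the residue at $t=0$ of each separately.

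The computation rests on two elementary observations. First, $(1-t^{-1})^2 = t^{-1}(t-2+t^{-1})$, so the residue at $t=0$ of $(1-t^{-1})^2 f(t)$ is the constant term of $(t-2+t^{-1})f(t)$, which is precisely the quantity $d$ in the statement. Second, writing $Q(t) = t\Delta_K(t) = -b_1 t^2 + (b_0+2b_1)t - b_1$, a genuine polynomial with $Q(0) = -b_1 \ne 0$ (here $\deg\Delta_K = 1$ forces $b_1 \ne 0$), one has
\[ \frac{(1-t^{-1})^2}{\Delta_K(t)^j} = \frac{(t-1)^2\, t^{j-2}}{Q(t)^j}, \]
which is holomorphic at $t=0$ when $j=2$ or $j=3$ — contributing nothing — and equals $t^{-1}\cdot\frac{(t-1)^2}{Q(t)}$ when $j=1$, with residue $\frac{(0-1)^2}{Q(0)} = -\frac{1}{b_1}$. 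Adding the contributions gives $\mathop{\operatorname{Res}}_{t=0}\frac{(1-t^{-1})^2 P_1(t)}{\Delta_K(t)^3} = d - \frac{a_2}{b_1}$, and multiplying by $-\tfrac12$ yields exactly $\lambda_1(S^3_0(K);0) = -\frac{d}{2} + \frac{a_2}{2b_1}$.

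There is no serious obstacle here — it is a self-contained manipulation of Laurent polynomials, essentially the $\ell=1$, $c=0$ case of Ohtsuki's general residue formula made explicit — so the main thing to be careful about is organizational: keeping track of which of the four pieces actually contributes (only the $f(t)$ piece and the $\Delta_K^2$ piece, via the identity $(1-t^{-1})^2 = t^{-1}(t-2+t^{-1})$ and the value $Q(0) = -b_1$ respectively), and remembering that ``residue at $t=0$'' means the coefficient of $t^{-1}$ in the expansion about $0$, not about $t=1$ where $\Delta_K$ is regular. I would also note in passing that since $\deg\Delta_K = 1$ the decomposition of $P_1$ displayed above is in fact unique (a short degree count using $b_1 \ne 0$), so the right-hand side of the formula is unambiguous.
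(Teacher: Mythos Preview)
The paper does not actually prove this lemma: it is quoted verbatim as \cite[Proposition~1.7(2)]{ohtsuki-perturbative} and used as a black box in the proof of Theorem~\ref{thm:pretzels}. So there is nothing in the paper to compare your argument against.

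That said, your derivation is correct. Specializing the residue formula for $\lambda_\ell(S^3_0(K);c)$ to $\ell=1$, $c=0$ and splitting the integrand into the four pieces coming from $P_1 = f\Delta_K^3 + a_2\Delta_K^2 + a_1\Delta_K + a_0$ is exactly the right move. The identity $(1-t^{-1})^2 = t^{-1}(t-2+t^{-1})$ handles the $f$-term and gives the constant $d$; the substitution $Q(t) = t\Delta_K(t)$ with $Q(0) = -b_1 \neq 0$ cleanly shows that the $j=2,3$ terms are holomorphic at $0$ and that the $j=1$ term has simple pole with residue $-1/b_1$. Summing and multiplying by $-\tfrac12$ recovers the stated formula. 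Your side remark about uniqueness of the decomposition is not strictly necessary for the argument---the residue you computed depends only on $P_1$ and $\Delta_K$, so the quantity $-\tfrac{d}{2}+\tfrac{a_2}{2b_1}$ is automatically independent of the chosen decomposition---but it does no harm.
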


\begin{theorem} \label{thm:pretzels}
Fix an integer $n \in \Z$.  If $S^3_0(P(-3,3,2n+1)) \cong S^3_0(K)$ for some knot $K \in S^3$, then $K$ is isotopic to $P(-3,3,2n+1)$.
\end{theorem}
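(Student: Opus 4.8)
The plan is to combine Lemma~\ref{lem:these-pretzels-are-making-me-thirsty} with an explicit evaluation of Ohtsuki's invariant $\lambda_1$ at $c=0$. By Lemma~\ref{lem:these-pretzels-are-making-me-thirsty}, the hypothesis $S^3_0(P(-3,3,2n+1)) \cong S^3_0(K)$ already forces $K$ to be isotopic to $P(-3,3,2m+1)$ for some $m\in\Z$ (note that mirroring is built in, since $\mirror{P(-3,3,2m+1)} = P(-3,3,-2m-1)$ is again of this form). So it remains only to show that an orientation-preserving homeomorphism $S^3_0(P(-3,3,2m+1)) \cong S^3_0(P(-3,3,2n+1))$ forces $m=n$. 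Since each $\lambda_1(\,\cdot\,;0)$ is an invariant of the oriented homeomorphism type of a $3$-manifold with $b_1=1$, this will follow once I check that $n \mapsto \lambda_1(S^3_0(P(-3,3,2n+1));0)$ is injective.

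To compute that invariant I would feed the expression \eqref{eq:pretzel-p1} for $P_1(t)$ into Lemma~\ref{lem:residue-shortcut}. Reading off \eqref{eq:pretzel-p1}, in the notation of that lemma we have $f(t)\equiv 0$ and $a_2 = \tfrac{2n+1}{4}$, $a_1 = \tfrac{2n+1}{2}$, $a_0 = -\tfrac{3(2n+1)}{4}$; and rewriting the Alexander polynomial as $\Delta_K(t) = -2t+5-2t^{-1} = 1 - (-2)(t-2+t^{-1})$ gives $b_0 = 1$ and $b_1 = -2$. Because $f\equiv 0$, the constant $d$ of the lemma (the constant term of $(t-2+t^{-1})f(t)$) is $0$, so Lemma~\ref{lem:residue-shortcut} yields
\[ \lambda_1\big(S^3_0(P(-3,3,2n+1));0\big) = \frac{a_2}{2b_1} = -\frac{2n+1}{16}. \]
(If the sign ambiguity noted after \eqref{eq:p1-formula} is in effect, this only changes the answer to $\pm\tfrac{2n+1}{16}$, which is immaterial for what follows.)

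The quantity $-\tfrac{2n+1}{16}$ is visibly an injective function of $n\in\Z$, so if $S^3_0(P(-3,3,2m+1))$ and $S^3_0(P(-3,3,2n+1))$ are orientation-preservingly homeomorphic then $-\tfrac{2m+1}{16} = -\tfrac{2n+1}{16}$, hence $m=n$. Together with the reduction from Lemma~\ref{lem:these-pretzels-are-making-me-thirsty} this shows $K$ is isotopic to $P(-3,3,2n+1)$, as claimed.

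I do not anticipate a genuine obstacle: the substantive work has all been isolated into Ohtsuki's formulas (Lemmas~\ref{lem:ohtsuki-pretzel} and \ref{lem:residue-shortcut}, together with \eqref{eq:pretzel-p1}) and into Lemma~\ref{lem:these-pretzels-are-making-me-thirsty}. The only points requiring care are bookkeeping — correctly matching \eqref{eq:pretzel-p1} to the normal form demanded by Lemma~\ref{lem:residue-shortcut}, and in particular getting $b_1=-2$ right — and making sure the comparison is of $\lambda_1$ evaluated on the \emph{same} oriented manifold, which is exactly why one uses that the characterizing-slope homeomorphism is orientation-preserving.
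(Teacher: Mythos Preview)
Your approach is exactly the paper's: reduce via Lemma~\ref{lem:these-pretzels-are-making-me-thirsty} to the pretzel family, then separate the pretzels by computing $\lambda_1(S^3_0(P(-3,3,2n+1));0)$ with Lemma~\ref{lem:residue-shortcut} applied to \eqref{eq:pretzel-p1}. The logic is sound and the conclusion is correct.

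There is one arithmetic slip worth fixing, precisely at the point you flag as needing care. Writing $\Delta_K(t) = b_0 - b_1(t-2+t^{-1}) = -b_1 t + (b_0+2b_1) - b_1 t^{-1}$ and matching with $-2t+5-2t^{-1}$ gives $b_1 = 2$, not $-2$; indeed $1-(-2)(t-2+t^{-1}) = 2t-3+2t^{-1}$, which is the wrong Alexander polynomial. With $b_1=2$ one gets $\lambda_1 = \tfrac{a_2}{2b_1} = \tfrac{2n+1}{16}$, matching the paper. This sign does not affect your injectivity argument, so the proof survives, but the stated value of $b_1$ and of $\lambda_1$ should be corrected.
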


\begin{proof}
Lemma~\ref{lem:these-pretzels-are-making-me-thirsty} guarantees that $K$ is $P(-3,3,2m+1)$ for some $m\in\Z$.  We use Lemma~\ref{lem:residue-shortcut} for $P(-3,3,2n+1)$: we have $(b_0,b_1) = (1,2)$, and \eqref{eq:pretzel-p1} tells us that
\[ (f(t),a_2,a_1,a_0) = \left(0, \frac{2n+1}{4}, \frac{2n+1}{2}, -\frac{3(2n+1)}{4}\right). \]
The constant term of $(t-2+t^{-1})f(t) = 0$ is $d=0$, so we end up with
\[ \lambda_1(S^3_0(P(-3,3,2n+1)); 0) = \frac{a_2}{2b_1} = \frac{2n+1}{16}. \]
But then an identical calculation says that
\[ \lambda_1(S^3_0(P(-3,3,2m+1)); 0) = \frac{2m+1}{16}, \]
and since these two invariants agree, we must have $m=n$.
\end{proof}

In fact, we can distinguish surgeries of any slope on these pretzel knots.

\begin{proposition} \label{prop:pretzel-nonzero}
If $r \in \Q$ is non-zero and $m$ and $n$ are distinct integers, then \[ S^3_r(P(-3,3,2m+1)) \not\cong S^3_r(P(-3,3,2n+1)). \]\end{proposition}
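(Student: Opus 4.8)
The plan is to distinguish these rational homology spheres by a single coefficient of Ohtsuki's perturbative invariant, one order beyond the Casson--Walker invariant, which is where the reduced $2$-loop polynomial first enters. The key input is \eqref{eq:pretzel-2-loop}: the reduced $2$-loop polynomial $\hat\Theta_{P(-3,3,2n+1)}(t) = -(2n+1)(t-4+t^{-1})$ depends linearly, hence injectively, on $n$, even though the Alexander polynomial $\Delta(t) = -2t+5-2t^{-1}$ does not.

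First I would record that every ``classical'' invariant of these surgeries is independent of $n$: for $r = p/q$ in lowest terms with $p \neq 0$, the manifold $S^3_r(P(-3,3,2n+1))$ has first homology $\Z/|p|$ and a linking form that depends only on $r$, and since $\Delta''(1) = -4$ for all of these knots, Walker's surgery formula shows that its Casson--Walker invariant is also independent of $n$. So one must go one order further. Set $M_n := S^3_{p/q}(P(-3,3,2n+1))$ and consider Ohtsuki's perturbative invariant of this rational homology sphere, a power series $\tau(M_n)$ whose successive coefficients $\lambda_0(M_n), \lambda_1(M_n), \lambda_2(M_n), \dots$ are themselves invariants of $M_n$. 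Here $\lambda_0 \equiv 1$, while $\lambda_1$ is a fixed nonzero multiple of the Casson--Walker invariant and hence is constant on our family; the first coefficient that can detect more than the Alexander polynomial is $\lambda_2$.

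The main step is a rational surgery formula --- in the spirit of Bar-Natan--Lawrence, and in the same circle of ideas as the residue formula of Lemma~\ref{lem:residue-shortcut} that computes $\lambda_1(S^3_0(K);0)$ --- expressing $\lambda_2(S^3_{p/q}(K))$ in terms of $p$, $q$, the Alexander polynomial $\Delta_K$, and the loop-expansion polynomial $P_1(t) = -(t^{1/2}-t^{-1/2})^2\hat\Theta_K(t)$. Substituting \eqref{eq:pretzel-p1}, which expresses $P_1(t)$ for $K = P(-3,3,2n+1)$ as $(2n+1)$ times a fixed quadratic polynomial in $\Delta_K(t)$, one obtains
\[ \lambda_2\big(S^3_{p/q}(P(-3,3,2n+1))\big) = A(p,q) + (2n+1)\,B(p,q), \]
where $A(p,q)$ collects the contributions depending only on $\Delta_K$ (so is constant on the family) and $B(p,q)$ is an explicit rational function of $p$ and $q$. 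Granting that $B(p,q) \neq 0$ for every $p \neq 0$, we conclude: if $S^3_r(P(-3,3,2m+1)) \cong S^3_r(P(-3,3,2n+1))$, then these two values of $\lambda_2$ coincide, so $(2m+1)B(p,q) = (2n+1)B(p,q)$, which forces $m = n$. Together with Theorem~\ref{thm:pretzels} for the case $r = 0$, this shows that $r$-surgery distinguishes the pretzels $P(-3,3,2n+1)$ for every $r \in \Q$. (To allow orientation-reversing homeomorphisms one notes that $-S^3_r(K) = S^3_{-r}(\overline K)$ and $\overline{P(-3,3,2n+1)} = P(-3,3,-2n-1)$, and rules out that case with the same invariant together with the sign behavior of the Casson--Walker invariant under orientation reversal.)

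The hard part will be executing the rational surgery formula to second order and verifying that $B(p,q)$ never vanishes for nonzero $p/q$. Unlike the clean residue computation of Lemma~\ref{lem:residue-shortcut}, this formula proceeds through a Gaussian-integral/Laplace-transform step whose bookkeeping --- normalizing by the lens space $L(p,q)$, tracking the sign of $q$, and isolating the $2$-loop contribution from the purely Alexander-polynomial terms --- is the delicate point; by analogy with the coefficient $q/p$ in Walker's formula I expect $B(p,q)$ to be a nonzero constant times a monomial in $p$ and $q$, but this must be checked. The integral slopes $r = \pm 1$ deserve particular care, since there $M_n$ is an integral homology sphere and $\lambda_2$ is literally Ohtsuki's degree-$2$ invariant of a $\pm1$-surgery.
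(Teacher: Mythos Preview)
Your strategy is in the right spirit---the reduced $2$-loop polynomial is indeed what distinguishes these surgeries---but as you yourself acknowledge, the argument is incomplete: you have not actually produced the rational surgery formula for $\lambda_2$, nor verified that the coefficient $B(p,q)$ is nonzero for every $p\neq 0$. That verification is not peripheral bookkeeping; it is the entire content of the proof, and ``by analogy I expect $B(p,q)$ to be a nonzero monomial, but this must be checked'' is a plan, not a proof. Until that step is carried out there is nothing to rule out an accidental vanishing of $B$ at some slope.

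The paper sidesteps this computation entirely by invoking a theorem of Ito \cite{ito-lmo}, which packages precisely the LMO/surgery-formula work you propose to do by hand. Ito shows that if two knots share the same Conway polynomial and the same $z^4$-coefficient $a_4$, and their nonzero $r$-surgeries are homeomorphic, then they must have the same degree-$3$ Vassiliev invariant $v_3$. All the pretzels $P(-3,3,2n+1)$ share $\nabla(z)=1-2z^2$ and $a_4=0$, so the hypotheses are met. Then Ohtsuki's identity $v_3(K)=\tfrac{1}{2}\hat\Theta_K(1)$ together with \eqref{eq:pretzel-2-loop} gives $v_3(P(-3,3,2n+1))=2n+1$, and distinct $n$ give distinct $v_3$. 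This is morally the same obstruction you are reaching for---Ito's result is proved via the LMO invariant, whose relevant piece involves exactly the $2$-loop contribution---but citing it replaces your delicate second-order surgery computation with a one-line evaluation of $\hat\Theta_K$ at $t=1$.
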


\begin{proof}
This uses an LMO invariant obstruction due to Ito \cite{ito-lmo}, just as in \cite[\S7]{bs-characterizing}: both knots have the same Conway polynomial $\nabla_K(z) = 1-2z^2$, with the same $z^4$-coefficient
\[ a_4(P(-3,3,2m+1)) = a_4(P(-3,3,2n+1)) = 0. \]
Thus if their $r$-surgeries are homeomorphic, then by \cite[Corollary~1.3(iv)]{ito-lmo} these knots must have the same finite type invariants
\[ v_3(P(-3,3,2m+1)) = v_3(P(-3,3,2n+1)). \]
But Ohtsuki \cite[Proposition~1.1]{ohtsuki-loop} proved that $v_3(K) = \frac{1}{2}\hat\Theta_K(1)$, and so \eqref{eq:pretzel-2-loop} says that \[ v_3(P(-3,3,2n+1))=2n+1, \] hence these pretzel knots have different $v_3$ invariants unless $m=n$.  (We note that Ohtsuki's normalization of $v_3$ differs from Ito's by a scalar, but this does not affect the argument.)
\end{proof}

We remark that Ito's obstruction cannot be used to prove Theorem~\ref{thm:pretzels}, however, because it only applies to non-zero surgeries.  Moreover, Proposition~\ref{prop:pretzel-nonzero} does not prove that non-zero slopes are characterizing for these pretzel knots, because for example the Heegaard Floer homology of $S^3_r(K) \cong S^3_r(P(-3,3,2n+1))$ may not suffice to determine $\hfkhat(K)$ when $r \neq 0$.

\bibliographystyle{myalpha}
\bibliography{References}

\begin{thebibliography}{AJOT13}

\bibitem[AJOT13]{AJOT}
T.~Abe, I.~D. Jong, Y.~Omae, and M.~Takeuchi.
\newblock Annulus twist and diffeomorphic 4-manifolds.
\newblock {\em Math. Proc. Cambridge Philos. Soc.}, 155(2):219--235, 2013.

\bibitem[BM18]{baker-motegi}
K.~L. Baker and K.~Motegi.
\newblock Noncharacterizing slopes for hyperbolic knots.
\newblock {\em Algebr. Geom. Topol.}, 18(3):1461--1480, 2018.

\bibitem[Bra80]{brakes}
W.~R. Brakes.
\newblock Manifolds with multiple knot-surgery descriptions.
\newblock {\em Math. Proc. Cambridge Philos. Soc.}, 87(3):443--448, 1980.

\bibitem[BS22a]{bs-characterizing}
J.~A. Baldwin and S.~Sivek.
\newblock Characterizing slopes for $5_2$.
\newblock arXiv:2209.09805, 2022.

\bibitem[BS22b]{bs-nonfibered}
J.~A. Baldwin and S.~Sivek.
\newblock Floer homology and non-fibered knot detection.
\newblock arXiv:2208.03307, 2022.

\bibitem[CC93]{cantwell-conlon-52}
J.~Cantwell and L.~Conlon.
\newblock Foliations of {$E(5_2)$} and related knot complements.
\newblock {\em Proc. Amer. Math. Soc.}, 118(3):953--962, 1993.

\bibitem[CDGW]{snappy}
M.~Culler, N.~M. Dunfield, M.~Goerner, and J.~R. Weeks.
\newblock Snap{P}y, a computer program for studying the geometry and topology
  of $3$-manifolds.
\newblock Available at \url{http://snappy.computop.org} (04/11/2022).

\bibitem[CG78]{casson-gordon}
A.~J. Casson and C.~M. Gordon.
\newblock On slice knots in dimension three.
\newblock In {\em Algebraic and geometric topology ({P}roc. {S}ympos. {P}ure
  {M}ath., {S}tanford {U}niv., {S}tanford, {C}alif., 1976), {P}art 2}, Proc.
  Sympos. Pure Math., XXXII, pages 39--53. Amer. Math. Soc., Providence, R.I.,
  1978.

\bibitem[CW23]{cheetham-west}
T.~Cheetham-West.
\newblock Distinguishing some genus one knots using finite quotients.
\newblock {\em J. Knot Theory Ramifications}, 32(5):Paper No. 2350035, 7, 2023.

\bibitem[Gab87]{gabai-foliations3}
D.~Gabai.
\newblock Foliations and the topology of {$3$}-manifolds. {III}.
\newblock {\em J. Differential Geom.}, 26(3):479--536, 1987.

\bibitem[Gab89]{gabai-solid-tori}
D.~Gabai.
\newblock Surgery on knots in solid tori.
\newblock {\em Topology}, 28(1):1--6, 1989.

\bibitem[IJ10]{ichihara-jong}
K.~Ichihara and I.~D. Jong.
\newblock Toroidal {S}eifert fibered surgeries on {M}ontesinos knots.
\newblock {\em Comm. Anal. Geom.}, 18(3):579--600, 2010.

\bibitem[Ito20]{ito-lmo}
T.~Ito.
\newblock On {LMO} invariant constraints for cosmetic surgery and other surgery
  problems for knots in {$S^3$}.
\newblock {\em Comm. Anal. Geom.}, 28(2):321--349, 2020.

\bibitem[MP18]{miller-piccirillo}
A.~N. Miller and L.~Piccirillo.
\newblock Knot traces and concordance.
\newblock {\em J. Topol.}, 11(1):201--220, 2018.

\bibitem[MP21]{manolescu-piccirillo}
C.~Manolescu and L.~Piccirillo.
\newblock From zero surgeries to candidates for exotic definite four-manifolds.
\newblock arXiv:2102.04391, 2021.

\bibitem[NW15]{ni-wu}
Y.~Ni and Z.~Wu.
\newblock Cosmetic surgeries on knots in {$S^3$}.
\newblock {\em J. Reine Angew. Math.}, 706:1--17, 2015.

\bibitem[Oht04]{ohtsuki-cabling}
T.~Ohtsuki.
\newblock A cabling formula for the 2-loop polynomial of knots.
\newblock {\em Publ. Res. Inst. Math. Sci.}, 40(3):949--971, 2004.

\bibitem[Oht07]{ohtsuki-loop}
T.~Ohtsuki.
\newblock On the 2-loop polynomial of knots.
\newblock {\em Geom. Topol.}, 11:1357--1475, 2007.

\bibitem[Oht10]{ohtsuki-perturbative}
T.~Ohtsuki.
\newblock Perturbative invariants of 3-manifolds with the first {B}etti number
  1.
\newblock {\em Geom. Topol.}, 14(4):1993--2045, 2010.

\bibitem[OS03]{osz-absolutely}
P.~Ozsv\'{a}th and Z.~Szab\'{o}.
\newblock Absolutely graded {F}loer homologies and intersection forms for
  four-manifolds with boundary.
\newblock {\em Adv. Math.}, 173(2):179--261, 2003.

\bibitem[OS04]{osz-knot}
P.~Ozsv\'{a}th and Z.~Szab\'{o}.
\newblock Holomorphic disks and knot invariants.
\newblock {\em Adv. Math.}, 186(1):58--116, 2004.

\bibitem[Oso06]{osoinach}
J.~K. Osoinach, Jr.
\newblock Manifolds obtained by surgery on an infinite number of knots in
  {$S^3$}.
\newblock {\em Topology}, 45(4):725--733, 2006.

\bibitem[Pic19]{piccirillo-shake}
L.~Piccirillo.
\newblock Shake genus and slice genus.
\newblock {\em Geom. Topol.}, 23(5):2665--2684, 2019.

\bibitem[Pic20]{piccirillo-conway}
L.~Piccirillo.
\newblock The {C}onway knot is not slice.
\newblock {\em Ann. of Math. (2)}, 191(2):581--591, 2020.

\bibitem[Ras03]{rasmussen-thesis}
J.~A. Rasmussen.
\newblock {\em Floer homology and knot complements}.
\newblock ProQuest LLC, Ann Arbor, MI, 2003.
\newblock Thesis (Ph.D.)--Harvard University.

\bibitem[Yas15]{yasui-corks}
K.~Yasui.
\newblock Corks, exotic 4-manifolds and knot concordance.
\newblock arXiv:1505.02551, 2015.

\end{thebibliography}

\end{document}